\documentclass[preprint,3p]{elsarticle}
\usepackage{stmaryrd}
\SetSymbolFont{stmry}{bold}{U}{stmry}{m}{n}
\usepackage{amsmath,bbm,mathbbol,mathtools}
\usepackage{amssymb,amsfonts,amsthm}
\usepackage{mathrsfs}
\usepackage{tabularx,lmodern}
\usepackage{color,soul,enumitem}
\usepackage{epsfig,epstopdf,color}
\usepackage{hyperref,graphicx}
\usepackage{multirow}
\usepackage[ruled]{algorithm2e}
\usepackage{flushend}
\usepackage{verbatim}
\usepackage{caption}
\usepackage{graphicx, subfig}
\usepackage{arydshln}
\usepackage[level]{datetime}
\usepackage{float}
\usepackage{booktabs}
\usepackage{tabularx}
\usepackage{natbib}
\usepackage{relsize}
\usepackage{ulem}
\usepackage{setspace}
\usepackage{epstopdf}
\usepackage{appendix}
\usepackage{bm}
\usepackage{algpseudocode}
\usepackage{adjustbox}
\usepackage{array}
\usepackage{empheq}

\numberwithin{equation}{section}

\usepackage{hyperref}
\hypersetup{
	colorlinks=true,
	citecolor=blue,
	linkcolor=red,
	urlcolor=green}
	
\usepackage{calc}     
\newcommand{\custombar}[2][1]{
	\stackrel{\rule{#1em}{0.4pt}}{#2}
}

\makeatletter

\newcommand{\Rmnum}[1]{\expandafter\@slowromancap\romannumeral #1@}
\makeatother

\newcommand{\be}{\begin{equation}}
\newcommand{\ee}{\end{equation}}
\newcommand{\ba}{\begin{array}}
\newcommand{\ea}{\end{array}}
\newcommand{\bea}{\begin{eqnarray}}
\newcommand{\eea}{\end{eqnarray}}
\newcommand{\beas}{\begin{eqnarray*}}
\newcommand{\eeas}{\end{eqnarray*}}

\newcommand{\im}{i}

\newtheorem{exmp}{Example}
\newtheorem{remark}{Remark}[section]

\newtheorem{lem}{Lemma}
\newtheorem{prf}{proof}

\begin{document}
	
\title{An efficient compact splitting Fourier spectral methods for computing the dynamics of rotating spin-orbit coupled spin-2 Bose-Einstein condenstates}

\author[eit]{Xin Liu}
\ead{xinliu@eitech.edu.cn}

\author[hunnu]{Ziqing Xie}
\ead{ziqingxie@hunnu.edu.cn}

\author[hunnu]{Yongjun Yuan}
\ead{yyj1983@hunnu.edu.cn}

\author[tju1,tju2]{Yong Zhang}
\ead{Zhang\_Yong@tju.edu.cn}

\author[hunnu]{Xinyi Zhao\corref{5}}
\ead{xinyizhao@hunnu.edu.cn}

\address[eit]{Eastern Institute for Advanced Study, Eastern Institute of Technology, Ningbo,  315200, China}
\address[hunnu]{MOE-LCSM, School of Mathematics and Statistics, Hunan Normal University, Changsha, Hunan 410081, China}

\address[tju1]{Center for Applied Mathematics and KL-AAGDM, Tianjin University, Tianjin, 300072, China}
\address[tju2]{State Key Laboratory of Synthetic Biology,   Tianjin University, Tianjin, 300072, China}

\cortext[5]{Corresponding author.}
	
\begin{abstract}

    This paper investigates the dynamics of spin-2 Bose-Einstein condensates (BECs) with rotation and spin-orbit coupling (SOC).
    In order to better simulate the dynamics, we present an efficient high-order compact splitting Fourier spectral method. This method splits the Hamiltonian into a linear part,
    which consists of the Laplace, rotation and SOC terms, and a nonlinear part that includes all the remaining terms. The wave function is well approximated by the Fourier spectral method and is numerically accessed with discrete Fast Fourier transform (FFT).
    For linear subproblem, the handling of rotation term and SOC term poses a major challenge. Using a function mapping based on rotation, we can integrate the linear subproblem exactly and explicitly. This mapping we propose not only helps eliminate the rotation term, but also prevents the SOC term from evolving into a time-dependent form. The nonlinear subproblem is integrated
    analytically in physical space.
    Such ``compact'' splitting involves only two operators and facilitates the design of high-order splitting schemes. Our method is spectrally accurate in space and high order in time. It is efficient, explicit, unconditionally stable and simple to implement. In addition, we derive some dynamical properties and carry out a systematic study, including accuracy and efficiency tests, dynamical property verification, the SOC effects and dynamics of vortex lattice.

\end{abstract}

\begin{keyword}
	Spin-2 Bose-Einstein condensates, spin-orbit coupling, rotating, dynamic, Time splitting method, Fourier spectral method.
\end{keyword}

\maketitle
\tableofcontents
	
	\section{Introduction}

    Bose-Einstein condensate (BEC) is a gaseous, superfluid state of matter that forms when the dilute boson gas is cooled to temperatures near absolute zero. In 1995, a team led by Eric Cornell and Carl Wieman combined laser and evaporative cooling techniques to achieve the first observation of BEC in the dilute $^{87}$Rb gases \cite{A1995}.
    Early experiments used magnetic traps, which froze the spin degrees of the atoms. In 1998, a BEC with internal spin degrees of freedom, known as a spinor BEC, was first realized in an optical dipole trap \cite{S1998}. In optical traps, due to the interparticle interaction, the direction of atomic spins can change. Consequently, the order parameter of a spin‑$F$ BEC has $2F + 1$ components, which can vary in space and time, giving rise to rich spin textures.
    Recently, the spin-orbit coupling (SOC) , which plays a key role in spintronic devices \cite{K2009}, spin Hall effect \cite{K2004}, topological insulators \cite{H2010} and Majorana fermions \cite{W2009}, has been successfully induced in a neutral atomic BECs by dressing two atomic spin states with a pair of lasers. These experiments sparked a strong activity in the area of spin-orbit-coupled cold atoms and discovered a number of exciting phenomena.
    In particular, many investigations show that the combination of SOC, rotation and atomic intrinsic interactions can generate various phenomena in the spinor BEC \cite{J2008,L2012,Z2013}. For example, a variety of exotic spin textures and fractional quantized vortices are exhibited for rotating spin-orbit-coupled BECs \cite{Z2011}, a new necklace-type state with double-ring structure is created in a spin-1 BEC system due to the SOC and rotation, and the ground-state phase diagrams of the spin-orbit-angular-momentum coupled $^{87}$Rb condensate are experimentally mapped out with first-order phase transitions \cite{Z2019}, SOC can control topological vortical phase transitions characterized by Anderson-Toulouse vortices, vortex-dipole lattices, and perpendicular vortex chains in the ground states of spin-2 BEC with SOC and rotation \cite{Z2021}.


	In the mean-field regime, when the temperature $T$ is much lower than the critical temperature $T_c$, a rotating SOC spin-2 BEC can be described by the five-component wave functions $\Psi:=\Psi (\mathbf{\textbf{x}} ,t)=\left( \psi_{2}(\mathbf{\textbf{x}},t),\psi_{1}(\mathbf{\textbf{x}},t),\psi_{0}(\mathbf{\textbf{x}},t),\psi_{-1}(\mathbf{\textbf{x}},t),\psi_{-2}(\mathbf{\textbf{x}},t) \right)^{\top}$ satisfying the coupled Gross-Pitaevskii equations (CGPEs) \cite{LY2025,W2019,X2019}
	\begin{align}
		\im\partial_t\psi_{\pm2}=&\mathcal{H}_0\psi_{\pm2}+\pm2c_1F_z\psi_{\pm2}+c_1F_{\mp}\psi_{\pm1}+\frac{c_2}{\sqrt{5}}A_{00}\bar{\psi}_{\mp2}
		{-\gamma L_{\mp}\psi_{\pm1}},\label{CGPEs1}\\
		\im\partial_t\psi_{\pm1}=&\mathcal{H}_0\psi_{\pm1}+\pm c_1F_z\psi_{\pm1}+c_1\left(\frac{\sqrt{6}}{2}F_{\mp}\psi_0+F_{\pm}\psi_{\pm2}\right)-\frac{c_2}{\sqrt{5}}A_{00}\bar{\psi}_{\mp1}
		{-\gamma[L_{\pm}\psi_{\pm2}+\frac{\sqrt{6}}{2}L_{\mp}\psi_0]},\\
		\im\partial_t\psi_0=&\mathcal{H}_0\psi_0+\frac{\sqrt{6}}{2}c_1\left(F_+\psi_1+F_-\psi_{-1}\right)+\frac{c_2}{\sqrt{5}}A_{00}\bar{\psi}_0
		{-\frac{\sqrt{6}}{2}\gamma\left[L_{+}\psi_1+L_{-}\psi_{-1}\right]},\label{CGPEs3}
	\end{align}
    where $\mathcal{H}_0=\left(-\frac{1}{2}\Delta+V+c_0\rho-\Omega L_z\right)$. Here $t$ denotes time and $\mathbf{x}=(x,y)^{\top}\in \mathbb{R}^2$ or $\mathbf{x}=(x,y,z)^{\top}\in \mathbb{R}^3$ is the Cartesian coordinate vector. $\bar{f}$ represents the conjugate of complex number $f$. $c_0$, $c_1$ and $c_2$ are the dimensionless mean-field, spin-exchange interaction constant and spin singlet-pairing interaction contants respectively. $\Delta$ is the Laplacian operator, and $\rho=\sum_{\ell=-2}^{2}\left|\psi_{\ell}\right|^2$ is the total density. The constant $\Omega$ is the angular velocity, and $L_z=-\im\left(x\partial_y-y\partial_x\right)$ is the z-component of the angular momentum operator. The constant $\gamma$ is the coupling strength, and $L_{\pm}:=\im\partial_y\pm\partial_x$ are the spin-orbit coupling operators. $V=V(\mathbf{x})$ is a real-valued external trapping potential. Here we choose $V(\mathbf{x})$ as a harmonic potential, i.e.,
    \begin{equation}\label{harmonic}
    	V(\mathbf{x}) = \frac{1}{2}
    	\begin{cases}
    		\gamma_x^2 x^2 + \gamma_y^2 y^2, & d = 2, \\
    		\gamma_x^2 x^2 + \gamma_y^2 y^2 + \gamma_z^2 z^2, & d = 3,
    	\end{cases}
    \end{equation}
	where $\gamma_x$, $\gamma_y$, and $\gamma_z$ represent the trap frequencies in the respective $x$-, $y$-, and $z$-directions. The spin vector $\mathbf{F}:=\left(F_x(\Psi),F_y(\Psi),F_z(\Psi)\right)^{\top}:=\left(\Psi^{\mathsf{H}}f_x\Psi,\Psi^{\mathsf{H}}f_y\Psi,\Psi^{\mathsf{H}}f_z\Psi\right)^{\top}$ with the conjugate transpose $\Psi^{\mathsf{H}}$ of $\Psi$ and the spin-2 matrices $\mathbf{f} = \left(f_x, f_y, f_z\right)^{\top}$, where
	\begin{equation*}
		f_x=\begin{pmatrix}
			0&1&0&0&0\\
			1&0&\frac{\sqrt{6}}{2}&0&0\\
			0&\frac{\sqrt{6}}{2}&0&\frac{\sqrt{6}}{2}&0\\
			0&0&\frac{\sqrt{6}}{2}&0&1\\
			0&0&0&1&0
		\end{pmatrix},f_y=\im\begin{pmatrix}
			0&-1&0&0&0\\
			1&0&-\frac{\sqrt{6}}{2}&0&0\\
			0&\frac{\sqrt{6}}{2}&0&-\frac{\sqrt{6}}{2}&0\\
			0&0&\frac{\sqrt{6}}{2}&0&-1\\
			0&0&0&1&0
		\end{pmatrix},f_z=\begin{pmatrix}
			2&0&0&0&0\\
			0&1&0&0&0\\
			0&0&0&0&0\\
			0&0&0&-1&0\\
			0&0&0&0&-2
		\end{pmatrix}.
	\end{equation*}
	Furthermore, $A_{00}:=\Psi^{\top}A\Psi$ with the matrix $A$ taking the form $A_{ij}=\frac{1}{\sqrt{5}}(-1)^{i-1}\delta_{i+j,6}~(i,j=1,2,\cdots,5)$, where $\delta_{i+j,6}$ represents the Kronecker delta.
	To be more detailed, we have
	\begin{align*}
		&F_x=\bar{\psi}_{1}\psi_{2}+\bar{\psi}_{2}\psi_{1}+\bar{\psi}_{-2}\psi_{-1}+\bar{\psi}_{-1}\psi_{-2}
		+\frac{\sqrt{6}}{2}\left(\bar{\psi}_{0}\psi_{1}+\bar{\psi}_{1}\psi_{0}+\bar{\psi}_{-1}\psi_{0}+\bar{\psi}_{0}\psi_{-1}\right),\\    			
		&F_y=\im\left[\bar{\psi}_{1}\psi_{2}-\bar{\psi}_{2}\psi_{1}+\bar{\psi}_{-2}\psi_{-1}-\bar{\psi}_{-1}\psi_{-2}
		+\frac{\sqrt{6}}{2}\left(\bar{\psi}_{0}\psi_{1}-\bar{\psi}_{1}\psi_{0}+\bar{\psi}_{-1}\psi_{0}-\bar{\psi}_{0}\psi_{-1}\right)\right],\\
		&F_z=2\left|\psi_2\right|^2+\left|\psi_1\right|^2-\left|\psi_{-1}\right|^2-2\left|\psi_{-2}\right|^2,~A_{00}=\dfrac{1}{\sqrt{5}}\left(2\psi_2\psi_{-2}-2\psi_1\psi_{-1}+\psi_0^2\right).
	\end{align*}
	Notice that all spin-2 matrices are Hermitian and the spin vector entries are real numbers.
	The CGPEs \eqref{CGPEs} can be written in the compact form as
	\begin{equation}\label{CGPEs}
		\im\partial_{t}\Psi=\left(\mathcal{H}_0I_5+c_1\mathbf{F}\cdot\mathbf{f}-\gamma\mathcal{S}\right)\Psi+ c_2 A_{00}A\bar{\Psi},
	\end{equation}
    where $I_5$ is the $5\times5$ identity matrix and
	\begin{center}
			$\mathcal{S}=$
			$\begin{pmatrix}
				0 & L_- & 0 & 0 & 0\\
				L_+ & 0 & \frac{\sqrt{6}}{2}L_- & 0 & 0\\
				0 & \frac{\sqrt{6}}{2}L_+ & 0 & \frac{\sqrt{6}}{2}L_- & 0\\
				0 & 0 & \frac{\sqrt{6}}{2}L_+ & 0 & L_-\\
				0 & 0 & 0 & L_+ & 0
			\end{pmatrix}$,
			$\mathbf{F}\cdot\mathbf{f}=$
			$\begin{pmatrix}
				2F_z & F_- & 0 & 0 & 0\\
				F_+ & F_z & \frac{\sqrt{6}}{2}F_- & 0 & 0\\
				0 & \frac{\sqrt{6}}{2}F_+ & 0 & \frac{\sqrt{6}}{2}F_- & 0\\
				0 & 0 & \frac{\sqrt{6}}{2}F_+ & -F_z & F_-\\
				0 & 0 & 0 & F_+ & -2F_z
			\end{pmatrix}$,
	\end{center}
	with $F_+=F_x+\im F_y=2\left(\bar{\psi}_{2}\psi_{1}+\bar{\psi}_{-1}\psi_{-2}\right)+\sqrt{6}\left(\bar{\psi}_1\psi_0+\bar{\psi}_0\psi_{-1}\right)$ and $F_-=F_x-\im F_y=\bar{F}_+.$
	
	There have been many numerical methods proposed for studying the dynamics of single-component BEC \cite{A2013,BC2013-2,BJ2003,BZ2005,C2000,M2009,T2006}, among which the time-splitting sine/Fourier pseudospectral method is one of the most successful. The time-splitting pseudospectral method exhibits spatial spectral-order accuracy and is relatively straightforward to implement. As far as we known, this method has been generalized to study the dynamics of spinor BECs \cite{BC2018}, nonrotating/rotating spin‑1 BEC with SOC term \cite{BK2022,C2025,K2021,LY2025}, and nonrotating spin‑2 BEC with SOC term \cite{BK2022}. There are little research on dynamic simulations for spin-2 BEC with SOC term and the more physically interesting rotating case. In this paper, we aim to perform a comprehensive study of the dynamics for rotating SOC spin-2 BEC.
	
	Numerically, the most challenges lie in proper treatments of the rotation and SOC terms. There exist some methods that successfully handle the rotating term. The standard alternating direction implicit (standard ADI) method splits the Hamiltonian into three parts and is of second order accurate in time \cite{BW2006}. However, it is somewhat tedious and complicated to construct a high-order scheme with such triple operators. The exact splitting method (ESM) groups the Laplace and rotation terms into a linear part, enabling its corresponding subproblem to be integrated analytically and explicitly in Fourier space \cite{LM2025}. this property facilitates the construction of high‑order temporal schemes. However, extending or adapting ESM to spinor BEC with SOC term remains nontrivial and highly challenging. The rotating Lagrangian coordinates (RLC) method eliminates the rotation term, thus simplifying the construction of high-order schemes \cite{BM2013}. However, when real-time dynamics are
	required, it is imperative to rotate the wave function $\psi(\mathbf{x},t)$ from rotating Lagrangian coordinates to physical Cartesian coordinatesat each time step $t_n$, and such rotation mapping is quite exhaustive computationally and poses great challenges to the simulation efficiency. Under these circumstances, Liu et al. \cite{LY2025} proposed a method that not only borrows the RLC idea to handle the rotation term, but also effectively addresses the complexities introduced by the SOC term.
	
	In \cite{LY2025}, by rotating the wave function in a fixed rectangular domain, the rotational term is eliminated for the rotated wave function. However, the SOC term thereby becomes explicitly time-dependent. To handle this, the linear system is integrated in Fourier space via a complicated time-dependent matrix decomposition. Notably, Liu et al. proposed a Rotation‑Shear‑Decomposition‑Acceleration (RSDA) method to implement the function-rotation, replacing the conventional Fourier spectral interpolation approach. The RSDA can reformulate the rotation mapping into a computationally friendly form by useing three-shear decomposition of the rotation matrix and its equivalent PDE reformulation. Meanwhile, since RSDA can be implemented efficiently with one-dimensional FFT/iFFT in Fourier space, it results in a significant efficiency enhancement.
	
	To efficiently simulate the dynamics of rotating SOC spin-2 BEC, we introduce a high-order compact splitting Fourier spectral method. To be specific, the CGPEs \eqref{CGPEs} is split into a linear subproblem
	\begin{equation}
		\im\partial_{t}\Psi(\mathbf{x},t)=\left[(-\frac{1}{2}\Delta-\Omega L_z)I_5-\gamma\mathcal{S}\right]\Psi(\mathbf{x},t) + c_2 A_{00}A\bar{\Psi}(\mathbf{x},t):=\mathcal{A}\Psi(\mathbf{x},t),
	\end{equation}
	and a nonlinear subproblem
	\begin{equation}
		\im\partial_{t}\Psi(\mathbf{x},t)=\left[(V+c_0\rho)I_5+c_1\mathbf{F}\cdot\mathbf{f}\right]\Psi(\mathbf{x},t) + c_2 A_{00}A\bar{\Psi}(\mathbf{x},t):=\mathcal{B}\Psi(\mathbf{x},t).
	\end{equation}
	The nonlinear subproblem is integrated analytically in physical space. Inspired by the idea of \cite{LY2025}, we propose a new function mapping that better addresses linear subproblem.
	
	The function mapping proposed in the \cite{LY2025} turns SOC term into time-dependent, that brings difficulties to solving linear subproblem. Therefore, based on our observations, we propose in this paper a new function mapping \eqref{mapping}. This mapping not only eliminates the rotational term but also prevents the SOC term from becoming time-dependent. Although it introduces an additional term, this term is completely explicit and linear, and thus poses no difficulty to the solution process.
	
	The organization of this paper is as follows. In Section \ref{sec-dp}, we derive some dynamical laws for some physical quantities. In Section \ref{sec-nm}, we propose an efficient and robust splitting Fourier spectral method to simulate the dynamics, and prove the mass conservation (unconditionally stability) and magnetization conservation. In Section \ref{sec-nr}, we test the temporal/spatial accuracies and efficiency, and study some interesting numerical phenomena. We make some concluding remarks in Section \ref{sec-c}.

	\section{Dynamical properties}\label{sec-dp}
    In this section, we demonstrate some main quantities in the study of dynamics of spin-2 BEC with rotation and SOC, including mass, energy, magnetization, angular momentum expectation and condensate width. The dynamical laws of these quantities are briefly presented and can be used as benchmarks for testing our numerical methods.

	\textbf{Mass and energy.} The CGPEs \eqref{CGPEs} have two important invariants: the mass of the wave function, which is defined as
	\begin{equation}
		\mathcal{N}(t) := \mathcal{N}(\Psi(\cdot,t)) := \int_{\mathbb{R}^d} \sum_{\ell=-2}^{2} \left| \psi_{\ell}(\mathbf{x},t) \right|^2 \,\mathrm{d}\mathbf{x} \equiv \mathcal{N}(\Psi(\cdot,0)) \equiv 1, \quad t \geq 0,
	\end{equation}
	and the energy per particle
	\begin{align}
		\mathcal{E}(t) := \mathcal{E}(\Psi(\cdot,t)) &= \int_{\mathbb{R}^d} \sum_{\ell=-2}^{2} \Big( \frac{1}{2} |\nabla \psi_{\ell}|^2 + V(\mathbf{x}) |\psi_{\ell}|^2 - \Omega \bar{\psi}_{\ell} L_z \psi_{\ell} \Big) + \frac{c_0}{2} \rho^2 + \frac{c_1}{2} \left(|F_+|^2 + |F_z|^2\right) \nonumber\\
		& + \frac{c_2}{2} |A_{00}|^2 - \gamma\Bigg[ \bar{\psi}_2L_- \psi_1 + \bar{\psi}_1(L_+ \psi_2 + \frac{\sqrt{6}}{2} L_- \psi_0) + \frac{\sqrt{6}}{2}\bar{\psi}_0(L_+ \psi_1 + L_- \psi_{-1}) \nonumber\\
		&+ \bar{\psi}_{-1}(L_- \psi_{-2} + \frac{\sqrt{6}}{2} L_+ \psi_0) + \bar{\psi}_{-2} L_+ \psi_{-1} \Bigg] \,\mathrm{d}\mathbf{x} \nonumber\\
		& \equiv \mathcal{E}(\Psi(\cdot,0)).
	\end{align}
	

    \textbf{Magnetization.} The magnetization of the wave function, defined as
    \begin{equation}
    	\mathcal{M}(t) = \mathcal{M}(\Psi(\cdot,t)) := \sum_{\ell=-2}^{2} \int_{\mathbb{R}^d} \ell\left| \psi_{\ell}(\mathbf{x},t) \right|^2 \,\mathrm{d}\mathbf{x},
    \end{equation}
    satisfies the relation
    \begin{equation}
    	\frac{d}{dt}\mathcal{M}(t) = 2\gamma\,\Re \int_{\mathbb{R}^d} \im \Big( \bar{\psi}_2L_-\psi_1 - \psi_{-2}L_-\bar{\psi}_{-1} + \frac{\sqrt{6}}{2}\bar{\psi}_1L_-\psi_0 - \frac{\sqrt{6}}{2} \psi_{-1}L_-\bar{\psi}_0 \Big) \mathrm{d}\mathbf{x}.
    \end{equation}
    Thus the magnetization is conserved (i.e., $\mathcal{M}(t) \equiv \mathcal{M}(0)$, $t\geq0$) in the absence of spin-orbit coupling ($\gamma = 0$).
	
	\textbf{Angular momentum expectation.} The angular momentum expectation of the condensate \eqref{CGPEs} is defined as
	\begin{equation}
		\langle L_z \rangle(t) = \sum_{\ell=-2}^{2} \int_{\mathbb{R}^d} \bar{\psi}_{\ell}(\mathbf{x},t) L_z \psi_{\ell}(\mathbf{x},t) \,\mathrm{d}\mathbf{x}, \quad t \geq 0.
	\end{equation}
	
%
	\begin{lem}\label{Ame}
		For CGPEs \eqref{CGPEs} with harmonic potential \eqref{harmonic}, the dynamics of angular momentum expectation is governed by the following
		ordinary differential equation (ODE)
		\begin{equation*}
			\frac{d\langle L_z \rangle(t)}{dt} = (\gamma_x^2 - \gamma_y^2) \int_{\mathbb{R}^d} xy~\rho~ \mathrm{d}\mathbf{x} - 2\gamma\Re \int_{\mathbb{R}^d} \im \Big( \bar{\psi}_{-1}L_-\psi_{-2} - \bar{\psi}_1L_+\psi_2 + \frac{\sqrt{6}}{2}\bar{\psi}_1L_-\psi_0 - \frac{\sqrt{6}}{2}\bar{\psi}_{-1}L_+\psi_0 \Big) \mathrm{d}\mathbf{x}.
		\end{equation*}
		Thus the angular momentum expectation is conserved, i.e.,
		\begin{equation}
			\langle L_z \rangle(t) \equiv \langle L_z \rangle(0), \quad t \geq 0,
		\end{equation}
		when $\gamma_x = \gamma_y$ and $\gamma = 0$.
	\end{lem}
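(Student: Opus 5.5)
We differentiate $\langle L_z\rangle(t)=\sum_{\ell}\int\bar\psi_\ell L_z\psi_\ell\,\mathrm{d}\mathbf{x}$ in time and substitute the CGPEs \eqref{CGPEs}. Since $L_z$ is self-adjoint (for sufficiently decaying $\Psi$),
\begin{equation*}
\frac{d\langle L_z\rangle}{dt}=\sum_{\ell}\int\big(\overline{\partial_t\psi_\ell}\,L_z\psi_\ell+\bar\psi_\ell L_z\partial_t\psi_\ell\big)\mathrm{d}\mathbf{x}
=2\Re\sum_\ell\int\bar\psi_\ell\, L_z\partial_t\psi_\ell\,\mathrm{d}\mathbf{x}.
\end{equation*}
Writing $\partial_t\Psi=-\im\mathcal{H}\Psi$, this becomes $\int\Psi^{\mathsf H}\,\im[L_z,\cdot\,]\Psi$ for the linear parts, with the nonlinear parts handled directly. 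We then treat the right-hand side of \eqref{CGPEs} term by term.

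\emph{Terms that vanish.} The kinetic term $-\frac12\Delta$ and the rotation term $-\Omega L_z$ commute with $L_z$, because $\Delta$ is rotation invariant. Their contributions are therefore purely imaginary and disappear under $\Re$. For the term $c_0\rho\,\psi_\ell$, summing over $\ell$ gives $\im\int\rho\,(\bar\Psi\cdot L_z\Psi-\Psi\cdot L_z\bar\Psi)$. Using $\sum_\ell(\bar\psi_\ell L_z\psi_\ell-\psi_\ell\overline{L_z\psi_\ell})=L_z\rho$ (up to sign conventions), this reduces to $\int\rho\,L_z\rho=-\im\int\rho(x\partial_y-y\partial_x)\rho=-\frac{\im}{2}\int(x\partial_y-y\partial_x)\rho^2=0$, after integrating by parts and noting that $x\partial_y-y\partial_x$ is divergence free. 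The $c_1$ and $c_2$ terms are handled the same way. The expressions $\sum\bar\psi_j(\mathbf F\cdot\mathbf f)_{jk}L_z\psi_k$ combine, since $\mathbf F\cdot\mathbf f$ is Hermitian with entries quadratic in $\Psi$, into $\frac{1}{2}L_z$ applied to $|F_+|^2+F_z^2$ (the energy density). Likewise the $c_2A_{00}A\bar\Psi$ term gives $\frac12 L_z|A_{00}|^2$, using that $A$ is symmetric so that $\Psi^\top A L_z\Psi=\frac12 L_z A_{00}$. Every such total-derivative term integrates to zero. The cleanest route is to view $\frac{c_0}{2}\rho^2+\frac{c_1}{2}(|F_+|^2+F_z^2)+\frac{c_2}{2}|A_{00}|^2$ as a gauge- and rotation-invariant function $G(\Psi,\bar\Psi)$ whose variational derivative is the nonlinear term; then $\sum_\ell 2\Re\int \overline{\partial_{\bar\psi_\ell}G}\,\im L_z\psi_\ell$-type terms equal $\int(x\partial_y-y\partial_x)G=0$.

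\emph{Potential term.} We use $[L_z,V]=-\im(x\partial_yV-y\partial_xV)=-\im(\gamma_y^2-\gamma_x^2)xy$. The potential contribution is then $\sum_\ell\int|\psi_\ell|^2\,\im[L_z,V]\cdot(-\im)\cdots$. With careful signs this yields $(\gamma_x^2-\gamma_y^2)\int xy\,\rho\,\mathrm{d}\mathbf{x}$, matching the single-component computation in the literature.

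\emph{SOC term.} We write $L_\pm=\im\partial_y\pm\partial_x$ and use the commutators $[L_z,\partial_x]=\im\partial_y$ and $[L_z,\partial_y]=-\im\partial_x$. These give $[L_z,L_\pm]=\pm L_\pm$ (up to a sign to be checked). The SOC contribution is $-2\gamma\Re\int\im\,\Psi^{\mathsf H}[L_z,\mathcal S]\Psi$-like, where $[L_z,\mathcal S]$ replaces each $L_+$ by $L_+$ and each $L_-$ by $-L_-$, times a sign. Pairing the upper and lower off-diagonal entries via the adjoint relation $L_\pm^*=-L_\mp$ collapses the ten terms to the four stated ones: $\bar\psi_{-1}L_-\psi_{-2}$, $\bar\psi_1L_+\psi_2$, $\bar\psi_1L_-\psi_0$, $\bar\psi_{-1}L_+\psi_0$. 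The conservation statement follows immediately by setting $\gamma_x=\gamma_y$ and $\gamma=0$.

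The main obstacle is this final SOC bookkeeping. It means getting the signs of $[L_z,L_\pm]$ right and using the adjoint relation to reduce to exactly the four stated terms, with the stated coefficients, rather than an equivalent but differently written form. I would verify it by fully expanding in terms of $\partial_x,\partial_y$ for one pair, say $\psi_2,\psi_1$, and then invoking the symmetry $\ell\leftrightarrow-\ell$.
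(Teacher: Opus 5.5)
Your route is the paper's: differentiate $\langle L_z\rangle$, substitute the CGPEs and integrate by parts. Packaging the linear terms as commutators with $L_z$, and the nonlinear terms as $\int(x\partial_y-y\partial_x)G\,\mathrm{d}\mathbf{x}=0$, is a tidy way to do this. Your treatment of the kinetic, rotation, nonlinear and potential terms is sound, and so is $[L_z,L_\pm]=\pm L_\pm$.

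The gap is in the SOC term, the only part of the lemma beyond the standard single-component computation. There the ingredients you state would not produce the claimed formula.

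\emph{The adjoint relation is wrong.} Integration by parts gives $L_\pm^{*}=L_\mp$, not $-L_\mp$; this is exactly what makes $\mathcal S$ Hermitian. With your relation, $\int\bar\psi_2L_-\psi_1$ would pair with $\int\bar\psi_1L_+\psi_2$ into a \emph{real} part, so $\langle\Psi,[L_z,\mathcal S]\Psi\rangle$ would come out real. After multiplying by $\im$ and taking real parts, the whole SOC contribution would vanish.

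\emph{The prefactor is off.} The correct identity is $\frac{d}{dt}\langle L_z\rangle=\int\Psi^{\mathsf H}\,\im[H,L_z]\Psi\,\mathrm{d}\mathbf{x}$, not $\im[L_z,\cdot\,]$. The $2\Re$ of your first line is already used up in forming the commutator. Hence the SOC part $-\gamma\mathcal S$ of $H$ contributes $\gamma\int\im\,\Psi^{\mathsf H}[L_z,\mathcal S]\Psi\,\mathrm{d}\mathbf{x}$, which is already real. Your $-2\gamma\Re\int\im\,\Psi^{\mathsf H}[L_z,\mathcal S]\Psi$ is $-2$ times this. The factor $-2\gamma$ in the lemma appears only after pairing the eight (not ten) off-diagonal terms.

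To close the gap, set $u_1=\int\bar\psi_1L_+\psi_2$, $u_2=\int\bar\psi_1L_-\psi_0$, $u_3=\int\bar\psi_{-1}L_+\psi_0$ and $u_4=\int\bar\psi_{-1}L_-\psi_{-2}$. Note that $[L_z,\mathcal S]$ is $\mathcal S$ with every $L_-$ replaced by $-L_-$. Using $L_\pm^*=L_\mp$ gives $\int\bar\psi_2L_-\psi_1=\bar u_1$, $\int\bar\psi_0L_+\psi_1=\bar u_2$, $\int\bar\psi_0L_-\psi_{-1}=\bar u_3$ and $\int\bar\psi_{-2}L_+\psi_{-1}=\bar u_4$. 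Therefore
\[
\langle\Psi,[L_z,\mathcal S]\Psi\rangle = 2\im\,\Im u_1-\sqrt6\,\im\,\Im u_2+\sqrt6\,\im\,\Im u_3-2\im\,\Im u_4 .
\]
The SOC contribution is then $\gamma\big(2\Im u_4-2\Im u_1+\sqrt6\,\Im u_2-\sqrt6\,\Im u_3\big)$. Since $\Re(\im w)=-\Im w$, this equals the stated $-2\gamma\Re\int\im\big(\bar\psi_{-1}L_-\psi_{-2}-\bar\psi_1L_+\psi_2+\frac{\sqrt6}{2}\bar\psi_1L_-\psi_0-\frac{\sqrt6}{2}\bar\psi_{-1}L_+\psi_0\big)$. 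The conservation statement for $\gamma_x=\gamma_y$ and $\gamma=0$ then follows as you say.
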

	\begin{prf}
		By using \eqref{CGPEs} and performing integration by parts, we can derive
		\begin{align*}
			\frac{d \langle L_z \rangle(t)}{dt} = & \sum_{\ell=-2}^{2} \int \left[ (\partial_t\bar{\psi}_{\ell})\,(L_z \psi_{\ell}) + \bar{\psi}_{\ell}\,L_z(\partial_t\psi_{\ell}) \right] \,\mathrm{d}\mathbf{x} \\
			= & \int_{\mathbb{R}^d} V(\mathbf{x}) \left(x\partial_y-y\partial_x\right)\rho - \gamma \im \Big( \psi_{-1}L_+\bar{\psi}_2 + \bar{\psi}_{-1}L_-\psi_2 - \psi_1L_-\bar{\psi_2} - \psi_1L_+\bar{\psi_2}\\
			& + \frac{\sqrt{6}}{2}\left[ \psi_1L_+\bar{\psi}_0 + \bar{\psi}_1L_-\psi_0 - \psi_{-1}L_-\psi_0 - \psi_{-1}L_+\psi_0\right] \Big) \,\mathrm{d}\mathbf{x} \\
			= & (\gamma_x^2 - \gamma_y^2) \int_{\mathbb{R}^d} xy\,\rho\, \mathrm{d}\mathbf{x} - 2\gamma\Re \int_{\mathbb{R}^d} \im \Big( \bar{\psi}_{-1}L_-\psi_{-2} - \bar{\psi}_1L_+\psi_2 + \frac{\sqrt{6}}{2}\bar{\psi}_1L_-\psi_0 - \frac{\sqrt{6}}{2}\bar{\psi}_{-1}L_+\psi_0 \Big) \mathrm{d}\mathbf{x}.
		\end{align*}
		This completes the proof.
	\end{prf}
%
	
	\textbf{Condensate width.} The condensate width in the $\alpha$-direction (where $\alpha=x,y,z$) is defined as $\sigma_{\alpha}=\sqrt{\delta_{\alpha}(t)},t \geq 0$, where
	\begin{equation}\label{ConWid}
		\delta_{\alpha}(t)=\sum_{\ell=-2}^{2}\delta_{\alpha,\ell}(t) \quad \text{with} \quad \delta_{\alpha,\ell}=\int_{\mathbb{R}^d}\alpha^{2}|\psi_{\ell}(\mathbf{x},t)|^{2}\,\mathrm{d}\mathbf{x}.
	\end{equation}
	In particular, the following lemma describes its dynamics in the 2D case.

	\begin{lem}\label{Cw}
		For the 2D CGPEs \eqref{CGPEs} with the radially symmetric harmonic potential, i.e., \eqref{harmonic} with $\gamma_x = \gamma_y =: \gamma_r$, we have
		\begin{equation}\label{d^2delta}
			\frac{d^{2}\delta_{r}(t)}{dt^{2}} = -4\gamma_r^2\delta_{r}(t) + 4\mathcal{E}(0) + 4\Omega\langle L_z\rangle(t) + G(\gamma,\Psi),
		\end{equation}
		where $\delta_{r}(t) := \delta_x(t) + \delta_y(t)$ and
		\begin{align}
			G(\gamma,\Psi) = 2\gamma\Re\int_{\mathbb{R}^d}&\Omega (\im y - x)F_{+}+\gamma\Big(2\rho + 3\left|\psi_1\right|^2 + 4\left|\psi_0\right|^2 + 3\left|\psi_{-1}\right|^2 + 2\bar{\psi}_2L_z\psi_2 + \bar{\psi}_1L_z\psi_1 \nonumber\\
			& - \bar{\psi}_{-1}L_z\psi_{-1} - 2\bar{\psi}_{-2}L_z\psi_{-2}\Big)\,\mathrm{d}\mathbf{x}.
		\end{align}
		In particular, when $\gamma=0$, it follows that
		\begin{equation}\label{delta_r}
			\delta_{r}(t) = \frac{\mathcal{E}(0)+\Omega\langle L_z\rangle(0)}{\gamma_r^2}\left[1 - \cos(2\gamma_{r}t)\right] + \delta_{r}^{(0)}\cos(2\gamma_{r}t) + \frac{\delta_{r}^{(1)}}{2\gamma_{r}}\sin(2\gamma_{r}t)
		\end{equation}
		with $\delta_{r}^{(0)}=\delta_{x}(0)+\delta_{y}(0)$ and $\delta_{r}^{(1)}=\dot{\delta}_{x}(0)+\dot{\delta}_{y}(0)$. Furthermore, if the initial condition is radially symmetric, we have
		\[
		\delta_{x}(t)=\delta_{y}(t)=\frac{1}{2}\delta_{r}(t), \quad t\geq0.
		\]
		Thus, the condensate widths $\sigma_{x}(t)$ and $\sigma_{y}(t)$ are periodic functions with frequency that is twice the trapping frequency $\gamma_r$ in this case.
	\end{lem}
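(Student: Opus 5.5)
The plan is to run the classical virial argument for the Gross--Pitaevskii equation, adapted to the five-component system \eqref{CGPEs}. I would differentiate $\delta_r(t)=\int_{\mathbb{R}^2}(x^2+y^2)\rho\,\mathrm{d}\mathbf{x}$ twice in time, using the equation for $\partial_t\psi_\ell$ and integrating by parts repeatedly.

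\textbf{First derivative.} Multiply the $\ell$-th equation by $r^2\bar\psi_\ell$, take imaginary parts, and sum over $\ell$. Several terms drop out:
\begin{itemize}
\item the potential and the local nonlinear terms $c_0\rho$ and $c_1\mathbf{F}\cdot\mathbf{f}$, because $\mathbf{F}\cdot\mathbf{f}$ is Hermitian and real-weighted;
\item the $c_2$ term, because $\Psi^{\top}A\bar{\Psi}$-type pairings give $\bar A_{00}A_{00}$, which is real;
\item the rotation term, since $L_z r^2=0$ and $L_z$ is self-adjoint.
\end{itemize}
This leaves
\[
\dot\delta_r=2\sum_\ell\Im\int r^2\bar\psi_\ell\Big(-\tfrac12\Delta\psi_\ell\Big)+(\text{SOC part})=2\sum_\ell\Im\int\bar\psi_\ell\,(\mathbf{x}\cdot\nabla)\psi_\ell\,\mathrm{d}\mathbf{x}\cdot(-1)+\ldots
\]
The SOC part comes from commuting $r^2$ with $L_\pm$. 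It gives terms like $\gamma\Re\int (x\mp\im y)\bar\psi_{j}\psi_{k}$, which combine into $\gamma\Re\int(\ldots)F_+$-type expressions, with $(x\pm \im y)$ weights coming from $[L_\pm,r^2]=2(\im y\pm x)$.

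\textbf{Second derivative.} Differentiate again. For the kinetic and virial part, the standard computation for $\frac{d}{dt}\Im\int\bar\psi\,\mathbf{x}\cdot\nabla\psi$ yields
\[
2\int|\nabla\psi_\ell|^2-2\int\mathbf{x}\cdot\nabla V|\psi_\ell|^2+(\text{nonlinear terms}).
\]
In 2D the nonlinear terms are exactly $\int(c_0\rho^2+c_1(|F_+|^2+F_z^2)+c_2|A_{00}|^2)$, because the nonlinearity is cubic and $L^2$-critical. The rotation term commutes with $\mathbf{x}\cdot\nabla$. With $\mathbf{x}\cdot\nabla V=2V=\gamma_r^2r^2$, I would then substitute the conserved energy $\mathcal{E}(0)$ to eliminate $\int|\nabla\psi|^2$ and the interaction terms. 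The $2\times$ kinetic-plus-interaction combination becomes
\[
4\mathcal{E}(0)-4\int V\rho+4\Omega\langle L_z\rangle+4\gamma(\text{SOC energy}),
\]
so that $-4\int V\rho-4\gamma_r^2\delta_r/2\cdot\ldots$ collapses to $-4\gamma_r^2\delta_r$. The remaining SOC contributions are:
\begin{itemize}
\item the SOC energy term;
\item the time derivative of the first-derivative SOC term, where $\partial_t$ hits $F_+$ and produces $\Omega$-dependent pieces through $L_z$ acting on $(\im y-x)$, plus $\gamma$-dependent pieces from $\mathcal S$ acting again;
\item the commutator of $\mathcal S$ with $\mathbf{x}\cdot\nabla$.
\end{itemize}
I would collect all of these into $G(\gamma,\Psi)$.

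\textbf{Main obstacle.} The hard step is this last bookkeeping. I would organize it using $L_\pm=\im\partial_y\pm\partial_x$ and the identities $[L_\pm,x\mp\im y]$ constant, and reduce everything to the densities $|\psi_\ell|^2$ with weights $(2,3,4,3,2)$, to $\bar\psi_\ell L_z\psi_\ell$, and to $F_+$. The spin-matrix coefficients $1,\sqrt6/2$ squared give the $3,4$ weights.

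\textbf{The case $\gamma=0$.} Here $G=0$ and $\langle L_z\rangle$ is constant by Lemma \ref{Ame} (since $\gamma_x=\gamma_y$). So $\ddot\delta_r+4\gamma_r^2\delta_r=4(\mathcal{E}(0)+\Omega\langle L_z\rangle(0))$, a linear constant-coefficient ODE whose solution with the given initial data is \eqref{delta_r}.

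\textbf{Radially symmetric data.} For radially symmetric initial data with $\gamma=0$, rotation invariance of the equation (radial $V$, $L_z$ commutes) preserves the radial symmetry of each $|\psi_\ell|^2$, giving $\delta_x=\delta_y$. Periodicity of $\sigma_x,\sigma_y$ with frequency $2\gamma_r$ then follows.
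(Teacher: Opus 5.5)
Your route is the paper's: differentiate the second moment twice using the CGPE, integrate by parts, trade $\int|\nabla\psi_\ell|^2$ and the interaction energies for $\mathcal{E}(0)$, solve the constant-coefficient ODE when $\gamma=0$ via Lemma~\ref{Ame}, and use rotational invariance for radial data. The paper works per direction with an auxiliary $G_1(\alpha,\gamma,\Psi)$ and then sums over $\alpha=x,y$; working with $\delta_r$ directly is slightly cleaner. The $\gamma=0$ and radial-data parts are fine. Restricting the symmetry argument to $\gamma=0$ is the correct reading, since $L_\pm$ maps radial functions to functions with angular dependence $e^{\pm\im\theta}$.

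The gap is the one step that carries the content of the lemma for $\gamma\neq0$: you never determine $G$, and the inventory you give would not produce it. Write $\mathcal{S}=\im(f_x\partial_y-f_y\partial_x)$ and $K:=yf_x-xf_y$, so that $\dot\delta_r=2\Im\int\Psi^{\mathsf H}(\mathbf{x}\cdot\nabla)\Psi\,\mathrm{d}\mathbf{x}+2\gamma\int\Psi^{\mathsf H}K\Psi\,\mathrm{d}\mathbf{x}$. Three $O(\gamma)$ terms then appear:
\begin{itemize}
\item eliminating the kinetic and interaction terms via $4\mathcal{E}(0)$ produces $+4\gamma\int\Psi^{\mathsf H}\mathcal{S}\Psi$;
\item the dilation commutator of $-\gamma\mathcal{S}$ (your ``commutator of $\mathcal S$ with $\mathbf x\cdot\nabla$'') gives only $-2\gamma\int\Psi^{\mathsf H}\mathcal{S}\Psi$;
\item the leftover $+2\gamma\int\Psi^{\mathsf H}\mathcal S\Psi$ is cancelled only by the Laplacian part of $\frac{d}{dt}\,2\gamma\int\Psi^{\mathsf H}K\Psi$, because $\im[-\frac12\Delta,K]=-\mathcal{S}$.
\end{itemize}
Your list of what $\partial_t$ produces on the SOC term names only $\Omega$-pieces and $\mathcal S$-pieces, so as written it leaves a spurious $O(\gamma)$ term. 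This exact cancellation is precisely why $G$ contains only $\gamma\Omega$ and $\gamma^2$ terms.

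You also need to say why $V$, $c_0$, $c_1$, $c_2$ do not enter $\frac{d}{dt}\int\Psi^{\mathsf H}K\Psi$: the local part of the flow conserves $F_x,F_y$ pointwise, cf.\ \eqref{F_alpha}. With these two points, the remaining commutators close the computation. First, $\im[-\Omega L_z,K]=-\Omega(xf_x+yf_y)$ gives $2\gamma\Omega\Re\int(\im y-x)F_+$. Second, $\im[-\gamma\mathcal S,K]=\gamma(f_x^2+f_y^2+f_zL_z)=\gamma(6I_5-f_z^2+f_zL_z)$ gives per-component weights $6-\ell^2=(2,5,6,5,2)$, i.e.\ $2\rho+3|\psi_1|^2+4|\psi_0|^2+3|\psi_{-1}|^2$ (not $(2,3,4,3,2)$ as you wrote), together with $\sum_\ell \ell\,\bar\psi_\ell L_z\psi_\ell$.

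A minor point: in your dilation step, the kinetic and potential coefficients are those of $\ddot\delta_r$, while the interaction coefficient is that of $\frac{d}{dt}\Im\int\bar\psi\,\mathbf{x}\cdot\nabla\psi$. They must carry the same factor for the substitution of $4\mathcal{E}(0)$ to work.
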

	\begin{prf}
		By using \eqref{CGPEs}, \eqref{ConWid} and performing integration by parts, we can derive
		\begin{align*}
			\frac{d\delta_{\alpha,2}(t)}{dt} & = \int_{\mathbb{R}^d} \alpha^2 \partial_t |\psi_2|^2 \,\mathrm{d}\mathbf{x} = \int_{\mathbb{R}^d} \alpha^2 \left( \partial_t \psi_2 \bar{\psi}_2 + \psi_2 \partial_t \bar{\psi}_2 \right) \,\mathrm{d}\mathbf{x}\\
			& = 2 \Re \int_{\mathbb{R}^d} \im \alpha \Big( \psi_2 \partial_\alpha \bar{\psi}_2 - \Omega |\psi_2|^2 L_z \alpha + c_1 \alpha F_+ \bar{\psi}_1\psi_2 + \frac{c_2}{\sqrt{5}} \alpha \bar{A_{00}} \psi_{-2}\psi_2 + \gamma\alpha \psi_2L_+\bar{\psi}_1 \Big) \,\mathrm{d}\mathbf{x}.
		\end{align*}
		Analogously, we have
		\begin{align*}
			\frac{d\delta_{\alpha,\pm1}(t)}{dt} = & 2 \Re \int_{\mathbb{R}^d} \im \alpha \Big( \psi_{\pm1} \partial_\alpha \bar{\psi}_{\pm1} - \Omega |\psi_{\pm1}|^2 L_z \alpha + c_1 \alpha \big[\frac{\sqrt{6}}{2}F_{\pm}\bar{\psi}_0\psi_{\pm1} +F_{\mp}\bar{\psi}_{\pm2}\psi_{\pm1}\big] - \frac{c_2}{\sqrt{5}} \alpha \bar{A_{00}} \psi_{\mp1}\psi_{\pm1}\\
			& + \gamma\alpha \big[\psi_{\pm1}L_{\mp}\bar{\psi}_{\pm2} + \frac{\sqrt{6}}{2}\psi_{\pm1}L_{\pm}\bar{\psi}_0\big] \Big) \,\mathrm{d}\mathbf{x},\\
			\frac{d\delta_{\alpha,0}(t)}{dt} = & 2 \Re \int_{\mathbb{R}^d} \im \alpha \Big( \psi_0 \partial_\alpha \bar{\psi}_0 - \Omega |\psi_0|^2 L_z \alpha + \frac{\sqrt{6}}{2}c_1 \alpha \left[F_-\bar{\psi}_1\psi_0 + F_+\bar{\psi}_{-1}\psi_0\right] + \frac{c_2}{\sqrt{5}} \alpha \bar{A_{00}} \psi_0^2\\
			& + \frac{\sqrt{6}}{2}\gamma\alpha \left[\psi_0L_-\bar{\psi}_1 + \psi_0L_+\bar{\psi}_{-1}\right] \Big) \,\mathrm{d}\mathbf{x},\\
			\frac{d\delta_{\alpha,-2}(t)}{dt} = & 2 \Re \int_{\mathbb{R}^d} \im \alpha \Big( \psi_{-2} \partial_\alpha \bar{\psi}_{-2} - \Omega |\psi_{-2}|^2 L_z \alpha + c_1 \alpha F_- \bar{\psi}_{-1}\psi_{-2} + \frac{c_2}{\sqrt{5}} \alpha \bar{A_{00}} \psi_2\psi_{-2} + \gamma\alpha \psi_{-2}L_-\bar{\psi}_{-1} \Big) \,\mathrm{d}\mathbf{x}.
		\end{align*}
        Then, a straightforward rearrangement shows
		\begin{align*}
			\frac{d\delta_{\alpha}(t)}{dt}=&\sum_{\ell=-2}^{2}\frac{d\delta_{\alpha,\ell}(t)}{dt}=2\Re\int_{\mathbb{R}^d}\im\alpha\Bigg(\sum_{\ell=-2}^{2}(\psi_\ell\partial_\alpha\bar{\psi}_\ell - \Omega|\psi_\ell|^2L_z\alpha) + \gamma\alpha\Big[\bar{\psi}_2L_-\psi_1 + \psi_1L_-\bar{\psi}_2 + \bar{\psi}_{-1}L_-\psi_{-2}\\
			& + \psi_{-2}L_-\bar{\psi}_{-1} + \frac{\sqrt{6}}{2}\left(\bar{\psi}_1L_-\psi_0 + \psi_0L_-\bar{\psi}_1 + \bar{\psi}_0L_-\psi_{-1} + \psi_{-1}L_-\bar{\psi}_0\right)\Big]\Bigg)\,\mathrm{d}\mathbf{x}.
		\end{align*}
		By performing integration by parts, we have
		$$2\Re\int_{\mathbb{R}^d} \im\gamma\alpha^2 \psi_1L_-\bar{\psi}_2 \,\mathrm{d}\mathbf{x} = - 2\Re\int_{\mathbb{R}^d} \im\gamma\bar{\psi}_2 L_-(\alpha^2\psi_1) \,\mathrm{d}\mathbf{x} = - 2\Re\int_{\mathbb{R}^d} \im\gamma\left( \alpha^2 \bar{\psi}_2L_-\psi_1 + 2\alpha \bar{\psi}_2\psi_1L_-\alpha \right) \, \mathrm{d}\mathbf{x}.$$
		Hence,
		\begin{align*}
			\frac{d\delta_{\alpha}(t)}{dt} = & 2 \Re \int_{\mathbb{R}^d} \im\alpha \Bigg[ \sum_{\ell = -2}^{2} \left( \psi_{\ell} \partial_{\alpha} \bar{\psi}_{\ell} - \Omega |\psi_{\ell}|^2 L_z \alpha \right) - \gamma \big( 2\bar{\psi}_2\psi_1L_-\alpha + 2\bar{\psi}_{-1}\psi_{-2} L_-\alpha + \sqrt{6}\bar{\psi}_1\psi_0L_-\alpha\\
			& + \sqrt{6}\bar{\psi}_0\psi_{-1} L_-\alpha\big) \Bigg] \,\mathrm{d}\mathbf{x}.
		\end{align*}
		By further differentiating the above equation with respect to $t$ and suppressing the elementary but tedious computations, we obtain
		\begin{align}
			\frac{d^2 \delta_{\alpha}(t)}{dt^2} = & -2 \gamma_r^2 \delta_{\alpha}(t) + \int_{\mathbb{R}^d} 2 \sum_{\ell=-2}^{2} \left| \partial_{\alpha} \psi_{\ell} \right|^2 + c_0 \rho^2 + c_1 |F_z|^2 + c_1 |F_+|^2 + c_2 |A_{00}|^2 - 2\gamma \Big( \bar{\psi}_2L_-\psi_1\nonumber\\
			& + \bar{\psi}_1(L_+\psi_2+\frac{\sqrt{6}}{2}L_-\psi_0) + \frac{\sqrt{6}}{2}\bar{\psi}_0(L_+\psi_1+L_-\psi_{-1}) + \bar{\psi}_{-1}(L_-\psi_{-2}+\frac{\sqrt{6}}{2}L_+\psi_0) + \bar{\psi}_{-2}L_+\psi_{-1} \Big)\nonumber\\
			& + 2\Omega(\partial_y - \partial_x)\alpha \left( 2i \sum_{\ell=-2}^{2}\left(\bar{\psi}_{\ell} (x\partial_y + y\partial_x)\psi_{\ell}\right) + \Omega (x^2 - y^2) \rho - \gamma\left(xF_x-yF_y\right) \right) \,\mathrm{d}\mathbf{x}+ G_1(\alpha, \gamma, \Psi),\label{d^2 delta}
		\end{align}
		where
		\begin{align*}
			G_1(\alpha, \gamma, \Psi) = & 2\gamma\Re \int_{\mathbb{R}^d}2\alpha \Big( L_+\bar{\psi}_1 \partial_{\alpha}\psi_2 + (L_-\bar{\psi}_2+\frac{\sqrt{6}}{2}L_+\bar{\psi}_0)\partial_\alpha\psi_1 + \frac{\sqrt{6}}{2}(L_-\bar{\psi}_1+L_+\bar{\psi}_{-1})\partial_\alpha\psi_0\\
			& + (L_+\bar{\psi}_{-2}+\frac{\sqrt{6}}{2}L_-\bar{\psi}_0)\partial_\alpha\psi_{-1} + L_-\bar{\psi}_{-1} \partial_{\alpha}\psi_{-2} \Big) + L_-\alpha \Bigg( 2\psi_1\partial_\alpha\bar{\psi}_2 + 2\psi_{-2}\partial_\alpha\bar{\psi}_{-1} + \sqrt{6}\psi_0\partial_\alpha\bar{\psi}_1\\
			& + \sqrt{6}\psi_{-1}\partial_\alpha\bar{\psi}_0 - \Omega F_+L_z\alpha + \gamma \Big[ 2\alpha\bar{\psi}_2L_+\psi_2 + \alpha\bar{\psi}_1L_+\psi_1 + \alpha\psi_{-1}L_+\bar{\psi}_{-1} + 2\alpha\psi_{-2}L_+\bar{\psi}_{-2}\\
			& - \left(2|\psi_1|^2 + 3|\psi_0|^2 + 2|\psi_{-1}|^2\right)L_+\alpha - (\sqrt{6}\psi_0\bar{\psi}_2 + 3\psi_{-1}\bar{\psi}_1 + \sqrt{6}\psi_{-2}\bar{\psi}_0)L_-\alpha \Big] \Bigg) \,\mathrm{d}\mathbf{x}.
		\end{align*}
		Hence, we have
		$$\frac{d^2 \delta_r(t)}{dt^2} = -4 \gamma_r^2 \delta_r(t) + 4 \mathcal{E} \left( \Psi(\cdot, 0) \right) + 4 \Omega \langle L_z \rangle(t) + G(\gamma, \Psi),$$
		where
		\begin{align*}
			G(\gamma, \Psi) &:= G_1(x, \gamma, \Psi) + G_1(y, \gamma, \Psi) \\
			&= 2\gamma \int_{\mathbb{R}^d} -\Omega\left(xF_x+yF_y\right) + \gamma \left( 2\rho + 3\left|\psi_1\right|^2 + 4\left|\psi_0\right|^2 + 3\left|\psi_{-1}\right|^2 + \sum_{\ell=-2}^{2}\ell\bar{\psi}_{\ell}L_z\psi_{\ell} \right) \,\mathrm{d}\mathbf{x}.
		\end{align*}
		As a result, we have $G(0, \Psi) = 0$ and $\langle L_z \rangle(t) \equiv \langle L_z \rangle(0)$ when $\gamma = 0$. Consequently, the $\delta_r(t)$ given in \eqref{delta_r} is the unique solution to second order ODE \eqref{d^2delta} with the initial data $\delta_r(0) = \delta_r^{(0)}$ and $\dot{\delta}_r(0) = \delta_r^{(1)}$. Furthermore, if $\Psi^0(\mathbf{x})$ is radial symmetric, the solution $\Psi(\mathbf{x}, t)$ is also radial symmetric since $\gamma_x = \gamma_y$, which implies that
		\[
		\delta_x(t) = \delta_y(t) = \frac{1}{2} \delta_r(t), \quad t \geq 0.
		\]
		This completes the proof.
	\end{prf}
	
\section{Numerical methods}\label{sec-nm}
	In this section, we develop a high-order compact splitting Fourier spectral method,
 where the Hamiltonian is split into a linear part $\mathcal{A}$ (comprising the Laplace, rotation, and SOC terms) and a nonlinear part $\mathcal{B}$ (consisting of all remaining terms). 
 Each subproblem can be integrated exactly in phase or physical space,
  as detailed in Subsection \ref{linearsubproblem} for the linear subproblem  and  in Subsection \ref{nonlinearsubproblem} for the nonlinear one.

	\subsection{Exact integrator for Laplace-Rotation-SOC subproblem}\label{linearsubproblem}
	In this subsection, we propose an exact and efficient integrator to solve the following Laplace-Rotation-SOC subproblem
	\begin{equation}\label{linearEQ1}
		\left\{\begin{aligned}
			&\im\partial_t\Psi(\mathbf{x},t)= \Big[\big(-\frac{1}{2}\Delta-\Omega L_z\big) I_5-\gamma\mathcal{S} \Big] \Psi(\mathbf{x},t) :=\mathcal{A}\Psi(\mathbf{x},t),\quad t_n\le t\le t_{n+1},\\
			&\Psi(\mathbf{x},t_n)=\Psi^n,\quad \mathbf{x}\in \mathbb{R}^d.
		\end{aligned}\right.
	\end{equation}
Inspired by \cite{LY2025}, we observe that,  for the rotated wave function obtained by introducing a function-rotation mapping through a rotation of variables, the rotation term vanishes and the Laplace keeps unchanged, but the SOC term becomes time-dependent. 
This transformation removes the numerical challenge induced by the rotation term, but the time-dependent SOC term makes the resulting variable-coefficient ODE system difficult to calculate exactly.    
To address this issue, we introduce a \textbf{function-rotation mapping with a phase factor} defined as
   \begin{equation}\label{mapping}
   	\phi_{\ell}(\mathbf{x},t):=e^{-\im\ell\Omega t}\psi_{\ell}(\mathcal{R}(t)\mathbf{x},t),\qquad \mathbf{x}\in\mathbb{R}^d,
   \end{equation}
   where $\mathcal{R}(t)$ is a time-dependent rotation matrix, given by    
	\begin{center}
		$\mathcal{R}(t)=$
		$\begin{pmatrix}
			\cos(\Omega t) & \sin(\Omega t)\\
			-\sin(\Omega t) & \cos(\Omega t)
		\end{pmatrix}$, $~$ if $d=2$,$~~~$
		$\mathcal{R}(t)=$
		$\begin{pmatrix}
			\cos(\Omega t) & \sin(\Omega t) & 0\\
			-\sin(\Omega t) & \cos(\Omega t) & 0\\
			0 & 0 & 1
		\end{pmatrix}$, $~$ if $d=3$.
	\end{center}
Using the chain rule, we obtain
    \begin{equation*}
    	\begin{aligned}
    		\partial_t\phi_\ell(\mathbf{x},t) &= -\im\ell\Omega e^{-\im\ell\Omega t} \psi_\ell(\tilde{\mathbf{x}},t) + e^{-\im\ell\Omega t} \big[\partial_t\psi_\ell(\tilde{\mathbf{x}},t) - \im\Omega  L_z\psi_\ell(\tilde{\mathbf{x}},t)\big],~~ 
    		\Delta\psi_\ell(\tilde{\mathbf{x}},t) = e^{\im\ell\Omega t} \Delta\phi_\ell(\mathbf{x},t),\\
    		L_+\psi_\ell(\tilde{\mathbf{x}},t) &= e^{\im\ell\Omega t}\left[\im\left(-\sin(\Omega t)\partial_x + \cos(\Omega t)\partial_y\right) + \left(\cos(\Omega t)\partial_x + \sin(\Omega t)\partial_y\right)\right]\phi_\ell(\mathbf{x},t) = e^{\im(\ell-1)\Omega t}L_{+}\phi_\ell(\mathbf{x},t),\\
    		L_-\psi_\ell(\tilde{\mathbf{x}},t) &= e^{\im\ell\Omega t}\left[\im\left(\cos(\Omega t)\partial_x + \sin(\Omega t)\partial_y\right) + \left(-\sin(\Omega t)\partial_x + \cos(\Omega t)\partial_y\right)\right]\phi_\ell(\mathbf{x},t) = e^{\im(\ell+1)\Omega t}L_{-}\phi_\ell(\mathbf{x},t),
    	\end{aligned}
    \end{equation*}
    where $\tilde{\mathbf{x}} := \mathcal{R}(t)\mathbf{x}$.
    From \eqref{linearEQ1} and these relations, a simple derivation yields the following system for the vector $\Phi := (\phi_2,\phi_1,\phi_0,\phi_{-1},\phi_{-2})^{\top}$.
    \begin{equation}\label{linearEQ2}
    	\left\{\begin{aligned}
    		&\im\partial_t\Phi(\mathbf{x},t)= \Big[\big(-\frac{1}{2}\Delta\big)I_5-\gamma\mathcal{S} +\Omega f_z \Big]\Phi(\mathbf{x},t)  :=\widetilde{\mathcal{A}}\Phi(\mathbf{x},t),\quad t_n\le t\le t_{n+1},\\
    		&\Phi(\mathbf{x},t_n)=\Psi(\mathcal{R}(t_n)\mathbf{x},t_n):=\Phi^n,\quad \mathbf{x}\in \mathbb{R}^d.
    	\end{aligned}\right.
    \end{equation}
    The Laplace and SOC terms remain unchanged, while the rotation term is eliminated and replaced by a constant term $\Omega f_z$. Consequently, the resulting system becomes straightforward to integrate exactly in Fourier space.

    \begin{remark}
     The introduction of a phase factor in the function-rotation mapping eliminates the time dependence of the coefficient matrix $\widetilde{\mathcal{A}}$. 
    In the absence of the phase factor \cite{LY2025}, the SOC term becomes time-dependent, thereby destroying the constant-coefficient structure of the linear soperator. With the phase factor included, the transformed system retains a time-invariant coefficient matrix $\widetilde{\mathcal{A}} = \big(-\frac{1}{2}\Delta\big)I_5-\gamma\mathcal{S} +\Omega f_z$ and thus admits exact integration in Fourier space.
    \end{remark}

    Since the wave function $\psi_{\ell}$ is smooth and decays exponentially fast, its mapped function $\phi_{\ell}$ is also smooth and rapidly decaying.
Therefore, it is reasonable to truncate the whole space $\mathbb{R}^d$ into a common large enough bounded domain  $\mathcal{D}=[-L, L]^d$, $L>0$ and
impose periodic boundary conditions for both $\psi_{\ell}$ and $\phi_{\ell}$. 
     We then apply the Fourier spectral method \cite{S2011} to approximate both wave functions and their spatial derivatives. For simplicity, we take 2D case as an example. The spatial mesh size is chosen as
    $h = {2L}/{N}$ with $N \in 2 \mathbb{Z}^{+}$.
    The Fourier, physical index and grid points sets are defined as follows
    \begin{align}
    	\mathcal{I}_N&=\left\{(j,k)\in\mathbb{Z}^2\mid 0\leq j\leq N - 1,~~ 0\leq k\leq N - 1\right\},\nonumber\\[0.2em]
    	\mathcal{T}_N&=\left\{(p,q)\in\mathbb{Z}^2\mid -N/2\leq p\leq N/2 - 1,~~  -N/2\leq q\leq N/2 - 1\right\},\nonumber\\[0.2em]
    	\mathcal{G}&=\left\{(x_j,y_k):=(-L + jh,-L+kh),\quad(j,k)\in\mathcal{I}_N\right\}.\nonumber
    \end{align}
    In order to discrete \eqref{linearEQ2}, we approximate the function $\phi_{\ell}$ by applying the Fourier spectral method
    \begin{equation*}\label{iFFT}
    	\phi_{\ell}(x,y,t)\approx\sum_{(p,q)\in\mathcal{T}_N}\widehat{\phi}_{\ell,pq}(t)~e^{\im\nu_p(x + L)}e^{\im\nu_q(y + L)},\qquad(x,y)\in\mathcal{D},
    \end{equation*}
    where $\nu_p = 2\pi p/(2L),\nu_q = 2\pi q/(2L)$. The discrete Fourier coefficients are given by
    \begin{equation*}\label{FFT}
\widehat{\phi}_{\ell,pq}(t)\approx\frac{1}{N^2}\sum_{(j,k)\in\mathcal{I}_N}\phi_{\ell,jk}(t)~e^{-\im\nu_p(x_j+L)}e^{-\im\nu_q(y_k+L)},
\qquad(p,q)\in\mathcal{T}_N,
    \end{equation*}
    where $\phi_{\ell,jk}(t)$ is the numerical approximation of $\phi_{\ell}(x_j,y_k,t)$.
    
 Applying the Fourier spectral approximation in space, the semi-discrete of \eqref{linearEQ2} 
 reduces to the following  linear system for the Fourier coefficients
	\begin{equation}\label{linearEQ3}
		\left\{\begin{aligned}
			&\im\partial_t\widehat{\Phi}_{pq}(t)=\widehat{\mathcal{A}}~\widehat{\Phi}_{pq}(t),\quad t_n\le t\le t_{n+1},\\
			&\widehat{\Phi}_{pq}(t_n):=(\widehat{\Phi^n})_{pq},\quad (p,q)\in \mathcal{T}_N
		\end{aligned}\right.
	\end{equation}
with $\widehat{\Phi}_{pq}:=\left(\widehat{\phi}_{2,pq},\widehat{\phi}_{1,pq},\widehat{\phi}_{0,pq},\widehat{\phi}_{-1,pq},\widehat{\phi}_{-2,pq}\right)^{\top}$. The constant coefficient matrix is given by 
	\begin{center}
		$\widehat{\mathcal{A}}=\frac{1}{2}(\nu_p^2+\nu_q^2)I_{5}-$
		$\begin{pmatrix}
			-2\Omega & \gamma a & 0 & 0 & 0\\
			\gamma\bar{a} & -\Omega & \frac{\sqrt{6}}{2}\gamma a & 0 & 0\\
			0 & \frac{\sqrt{6}}{2}\gamma\bar{a} & 0 & \frac{\sqrt{6}}{2}\gamma a & 0\\
			0 & 0 & \frac{\sqrt{6}}{2}\gamma\bar{a} & \Omega & \gamma a\\
			0 & 0 & 0 & \gamma\bar{a} & 2\Omega
		\end{pmatrix}$
		$:=\frac{1}{2}(\nu_p^2+\nu_q^2)I_{5}-Q$
	\end{center}
	with $a:=-\nu_q-\im \nu_p$. 
The equation \eqref{linearEQ3} is actually an
ODE system with constant skew-Hermitian coefficient matrix, then we have
    \begin{equation*}\label{sl3}
    	\widehat{\Phi}_{pq}(t)=e^{-\im(t-t_n)\widehat{\mathcal{A}}}~(\widehat{\Phi^n})_{pq}=
    e^{-\frac{1}{2}\im(\nu_p^2+\nu_q^2)(t-t_n)}e^{\im(t-t_n)Q}~(\widehat{\Phi^n})_{pq}.
    \end{equation*}
    Therefore, the solution  to system \eqref{linearEQ2} at $t = t_{n+1}$ can be written as
    \begin{equation}\label{sl1}
    	\Phi_{jk}^{n+1} = \sum_{(p,q)\in\mathcal{T}_N} e^{-\frac{1}{2}\im(\nu_p^2+\nu_q^2)\tau} e^{\im \tau Q}~ (\widehat{\Phi^n})_{pq}~e^{\im\nu_p(x_j+L)}e^{\im\nu_q(y_k+L)}.
    \end{equation}
    Specifically,  the matrix exponential $e^{\im \tau Q}$ can be computed explicitly as
    \begin{center}
    	$e^{\im \tau Q}=$
    	$\begin{pmatrix}
    		c_{11} & ac_{12} & a^2c_{13} & a^3c_{14} & a^4c_{15}\\
    		\bar{a}c_{12} & c_{22} & ac_{23} & a^2c_{24} & a^3c_{25}\\
    		\bar{a}^2c_{13} & \bar{a}c_{23} & c_{33} & ac_{34} & a^2c_{35}\\
    		\bar{a}^3c_{14} & \bar{a}^2c_{24} & \bar{a}c_{34} & c_{44} &ac_{45}\\
    		\bar{a}^4c_{15} & \bar{a}^3c_{25} & \bar{a}^2c_{35} & \bar{a}c_{45} & c_{55}
    	\end{pmatrix}$,
   	\end{center}
    where
    {\allowdisplaybreaks\begin{align*}
    	c_{11}=&\frac{1}{8\lambda^4}\left[(8\Omega^4+8\Omega^2\xi^2+\xi^4)\eta_1+4\xi^2(2\Omega^2+\xi^2)\eta_2-4\Omega\lambda(2\Omega^2+\xi^2)\eta_3-8\Omega\lambda \xi^2\eta_4+8\lambda^4\right],\\
    	c_{12}=&\frac{\gamma }{4\lambda^4}\left[-\Omega(4\Omega^2+3\xi^2)\eta_1+4\Omega^3\eta_2+\lambda(4\Omega^2+\xi^2)\eta_3-2\lambda(2\Omega^2-\xi^2)\eta_4\right],\\
    	c_{13}=&\frac{\sqrt{6}\gamma^2}{4\lambda^4}\left[(2\Omega^2+\xi^2)\eta_2-2\Omega\lambda\eta_4+2\lambda^2\right]\eta_2,\quad c_{14}=\frac{\gamma^3}{2\lambda^4}\left(-\Omega\eta_2+\lambda\eta_4\right)\eta_2,\quad c_{15}=\frac{\gamma^4}{8\lambda^4}\left(\eta_1-4\eta_2\right),\\
    	c_{22}=&\frac{1}{2\lambda^4}\left(2\xi^2\eta_2+\lambda^2\right)\left[(2\Omega^2+\xi^2)\eta_2-2\Omega\lambda\eta_4+2\lambda^2\right],\quad c_{23}=\frac{\sqrt{6}\gamma }{2\lambda^4}\left(\xi^2\eta_2+\lambda^2\right)\left(-\Omega\eta_2+\lambda\eta_4\right),\\
    	c_{24}=&\frac{\gamma^2}{2\lambda^4}\left(2\xi^2\eta_2+3\lambda^2\right)\eta_2,\quad c_{25}=c_{14}+\frac{\Omega\gamma^3}{\lambda^4}\eta_2^2,\quad c_{33}=\frac{3\xi^2}{4\lambda^4}(\xi^2\eta_1+4\Omega^2\eta_2)+1,\\
    	c_{34}=&c_{23}+\frac{\sqrt{6}\Omega\gamma }{\lambda^4}\left(\xi^2\eta_2+\lambda^2\right)\eta_2,\quad c_{35}=c_{13}+\frac{\sqrt{6}\Omega\gamma^2}{\lambda^3}\eta_2\eta_4,\quad c_{44}=c_{22}+\frac{2\Omega}{\lambda^3}(2\xi^2\eta_2+\lambda^2)\eta_4,\\
    	c_{45}=&c_{12}+\frac{\Omega\gamma }{2\lambda^4}\left[(4\Omega^2+3\xi^2)\eta_1-4\Omega^2\eta_2\right],\quad c_{55}=c_{11}+\frac{\Omega}{\lambda^3}\left[(2\Omega^2+\xi^2)\eta_3+2\xi^2\eta_4\right]
    \end{align*}}
    with $\xi=\gamma\left|a\right|,~\lambda=\sqrt{\Omega^2+\xi^2},~\eta_1=\cos{(2\lambda\tau)}-1,~\eta_2=\cos{(\lambda\tau)}-1,
    ~\eta_3=\im\sin{(2\lambda\tau)},~\eta_4=\im\sin{(\lambda\tau)}$.
\vspace{0.3em}
\begin{remark}\label{comput-cost}
In fact, the matrix $e^{\im\tau Q}$ depends only on $\Omega$, $\gamma$ and the time step $\tau$, hence it can be computed once and treated as a pre-computation.
This significantly reduces the computational cost compared with \cite{LY2025}, where the corresponding matrix needs to be updated at each time step.
\end{remark}
\vspace{0.3em}
Finally, we obtain the solution to the linear subproblem \eqref{linearEQ1} as
    \begin{equation*}
    	\psi^{n+1}_{\ell}(\mathbf{x}_{jk})=e^{\im\ell\Omega t}~\phi^{n+1}_{\ell}(\mathcal{R}^{-1}(t_{n+1})\mathbf{x}_{jk}) ~~\text{with}~~ \mathbf{x}_{jk}=(x_j,y_k)^{\top}.
    \end{equation*}

    To ensure that the proposed procedure is computationally viable, an efficient implementation of 
    the function-rotation mapping with a phase factor is crucial. Fortunately,
the inclusion of the phase factor does not introduce any additional computational difficulty, and the mapping can still be efficiently realized using the Rotation-Shear-Decomposition-Acceleration (RSDA) method \cite{LY2025}. Specifically, \begin{align}
    		\phi_{\ell}^n(\mathbf{x}) &= e^{-\im\ell\Omega t_n}e^{a_1 y \partial_x} e^{b_1 x \partial_y} e^{a_1 y \partial_x} \psi_{\ell}^n(\mathbf{x}), \qquad \mathbf{x}\in\mathbb{R}^d,\label{RSDA1}\\[0.5em]
    		\psi_{\ell}^{n+1}(\mathbf{x}) &= e^{\im\ell\Omega t_{n+1}}e^{a_2 y \partial_x} e^{b_2 x \partial_y} e^{a_2 y \partial_x} \phi_{\ell}^{n+1}(\mathbf{x}),\qquad \mathbf{x}\in\mathbb{R}^d,\label{RSDA2}
    	\end{align}
    	where $a_1 = \tan(\Omega t_n/2)$, $b_1 = -\sin(\Omega t_n)$, $a_2 = -\tan(\Omega t_{n+1}/2)$ and $ b_2 = \sin(\Omega t_{n+1})$. The numerical implementation of \eqref{RSDA1} and \eqref{RSDA2}  requires only one-dimensional FFTs and iFFTs,  thereby achieving nearly optimal efficiency. Details can be found in \cite{LY2025}.

    \begin{remark}[Special cases]
    The proposed method naturally resolves special physical cases, such as the vanishing spin-orbit coupling case $(\gamma=0)$ and the non-rotating case $(\Omega=0)$.
    	\begin{itemize}
    		\item \textbf{Non-SOC}: When the spin-orbit coupling strength $\gamma$ goes to zero,
    we have 
    \bea
    \label{NonSOCMat}
    e^{\im \tau Q } \longrightarrow 
    \begin{pmatrix}
			e^{-2 \im \tau \Omega} & 0 & 0 & 0 & 0\\
			0 & e^{- \im \tau \Omega}  & 0 & 0 & 0\\
			0 & 0 & 1 & 0 & 0\\
			0 & 0 & 0 & e^{ \im \tau \Omega} & 0\\
			0 & 0 & 0 & 0 & e^{2 \im \tau \Omega}
		\end{pmatrix}
    \eea
    and the numerical scheme \eqref{sl1} reads as 
   \beas
   \phi_{\ell, jk}^{n+1} = \sum_{(p,q)\in\mathcal{T}_N} e^{-\frac{1}{2}\im(\nu_p^2+\nu_q^2)\tau} e^{-\ell  \im \tau \Omega}~ (\widehat{\phi^n_{\ell}})_{pq}~e^{\im\nu_p(x_j+L)}e^{\im\nu_q(y_k+L)}.
   \eeas
     This scheme aligns with the numerical scheme for solving the PDEs \eqref{linearEQ1} with $\gamma = 0$ using the function-rotation mapping proposed in \cite{LY2025}.
    		\item \textbf{Non-rotating}:
      When the rotating speed $\Omega$ goes to zero, simple calculations imply that
      $\phi_{\ell} = \psi_{\ell}$ and $\widetilde{\mathcal{A}} = \left(-\frac{1}{2}\Delta\right) I_5 - \gamma\mathcal{S} = \mathcal{A}$.  Therefore, the corresponding numerical scheme aligns with the scheme for solving the PDEs \eqref{linearEQ1} with $\Omega = 0$. 
    	\end{itemize}
    \end{remark}
    \begin{remark}[Extension to three-dimensional problem]
    	It is straightforward to extend the above method to three dimensional case,
    	because both the rotation and SOC terms are independent of space variable $z$,
    	and we choose to omit details for brevity.
    \end{remark}

	\subsection{Exact evaluation for nonlinear subproblem}\label{nonlinearsubproblem}
    In this subsection, we present an exact solution to the following nonlinear subproblem \cite{S2017}
	\begin{equation}\label{nonlEQ1}
		\left\{\begin{aligned}
			&\im\partial_t\Psi(\mathbf{x},t)=\big[(V(\mathbf{x})+c_0\rho)I_5+c_1\mathbf{F}\cdot\mathbf{f}\big]\Psi(\mathbf{x},t)+c_2A_{00}A\bar{\Psi}(\mathbf{x},t),\quad t_n\le t\le t_{n+1},\\
			&\Psi(\mathbf{x},t_n)=\Psi^n,\quad \mathbf{x}\in \mathbb{R}^d,
		\end{aligned}\right.
	\end{equation}
	where the nonlinearity arises only from the density $\rho = \Psi^{\mathsf{H}} \Psi$, spin vector $\mathbf{F} = (F_x, F_y, F_z)^{\top}$  and $A_{00}=\Psi^{\top}A\Psi$. In fact, the above equation reduces to a linear ODE, since $\rho$ and  $\mathbf{F}$  are time-independent and $A_{00}(t)$ can be solved analytically.

	Specifically, using the facts that $f_{\alpha}(\alpha=x,y,z)$ and $A$ are Hermitian matrices, and that the commutator relations $\left[f_x,f_y\right]:=f_xf_y-f_yf_x=\im f_z$, $\left[f_y,f_z\right]=\im f_x$, $\left[f_z,f_x\right]=\im f_y$ and $\Psi^{\top}Af_{\alpha}\Psi=0$ hold, we have
	\begin{align}
		\partial_t\rho &= \partial_t\left(\Psi^{\mathsf{H}}\Psi\right)=\im c_1\Psi^{\mathsf{H}}\left((\mathbf{F}\cdot\mathbf{f})^{\mathsf{H}}-\mathbf{F}\cdot\mathbf{f}\right)\Psi+\im c_2\left(\custombar{A_{00}}\Psi^{\top}A^{\mathsf{H}}\Psi-A_{00}\Psi^{\mathsf{H}}A\bar{\Psi}\right)=0,\\[0.4em]
		\partial_tF_{\alpha}&=\partial_t\left(\Psi^{\mathsf{H}}f_{\alpha}\Psi\right)=\im c_1 \Psi^{\mathsf{H}}\left[\mathbf{F}\cdot\mathbf{f},f_{\alpha}\right]\Psi + \im c_2 \left(\custombar{A_{00}}\Psi^{\top}Af_{\alpha}\Psi-A_{00}\Psi^{\mathsf{H}}f_{\alpha}A\bar{\Psi}\right)=0. \label{F_alpha}
	\end{align}
	This implies that $\rho$ and $F_{\alpha}$ are time-invariant, i.e., $\rho(\mathbf{x},t)\equiv\rho(\mathbf{x},t_n):=\rho^n$ and $F_{\alpha}(\Psi)\equiv F_{\alpha}(\Psi^n):=F_{\alpha}^n$ for any time $t_n\leq t\leq t_{n+1}$. For the quantity $A_{00}$, using the identity
	$
	(\mathbf{F}^n\cdot\bar{\mathbf{f}})A+A(\mathbf{F}^n\cdot\mathbf{f})=0$ with
	$\mathbf{F}^n:=\left(F_x^n,F_y^n,F_z^n\right)^{\top}
	$, a simple calculation
	shows that
	\begin{equation*}
			\partial_t {A_{00}}
			= -2\im\left[V + \left(c_0+\frac{c_2}{5}\right) {\rho^n}\right]{A_{00}},
	\end{equation*}
	from which we derive exact and explicit formula for the $A_{00}$ as follows
	\begin{equation*}
		A_{00}(t)=e^{-2\im(t-t_n)\big[V + \left(c_0+\frac{c_2}{5}\right) {\rho^n}\big]}A_{00}^n \quad \mbox{with} \quad A_{00}^n=(\Psi^n)^{\top}A\Psi^n.
	\end{equation*}

Introducing the transformation
    \begin{equation*}
    	\begin{aligned}
    		\tilde{\Psi}(\textbf{x},t) :=& e^{\im (t-t_n)\big( \big[V + (c_0 +\frac{c_2}{5}) {\rho}^n \big]I_5 +c_1\mathbf{F}^n\cdot\mathbf{f}\big)} \Psi(\textbf{x},t),\\
    	\end{aligned}
    \end{equation*}
   and  plugging it into \eqref{nonlEQ1}, we obtain
    \begin{equation}\label{nonlEQ3}
    	\partial_t \tilde{\Psi}(\textbf{x},t) = \im c_2 \Big(\frac{{\rho^n}}{5} \tilde{\Psi} - A_{00}^nA\bar{\tilde{\Psi}}\Big)\quad \mbox{with} \quad \tilde{\Psi}(\mathbf{x},t_n)=\Psi^n.
    \end{equation}
    To solve the ODE \eqref{nonlEQ3}, we take its conjugate and introduce the vector $\zeta(\textbf{x},t) := \big( \tilde{\Psi}^{\top},\bar{\tilde{\Psi}}^{\top} \big)^{\top}$, which satisfies
\begin{equation}\label{nonlEQ4}
		\partial_t \zeta(\textbf{x},t) = \im c_2 \begin{pmatrix}
			\frac{\rho^n}{5} I_5 & -{A_{00}^n}A\\[0.5em]
			\custombar{A_{00}^n}A & -\frac{\rho^n}{5} I_5
		\end{pmatrix} \zeta(\textbf{x},t)
		:= \im c_2 D  \zeta(\textbf{x},t).
\end{equation}
Clearly, the coefficient matrix $D$ is time-independent  and satisfies
$D^2 = \big[\left(\rho^n/5\right)^2 - |{A_{00}^n}|^2 /5\big]I_{10} := S^2I_{10}$.  It follows that the exact solution to ODE \eqref{nonlEQ4} reads as
 \begin{equation*}
	\begin{aligned}
		\zeta(\textbf{x},t) &= e^{\im c_2(t-t_n)D}\zeta(\textbf{x},t_n)
		= \Big[\cos(c_2S(t-t_n))I_{10}+\frac{\im}{S}\sin(c_2S(t-t_n))D\Big]\zeta(\textbf{x},t_n),
	\end{aligned}
\end{equation*}
where $S=\sqrt{\left({{\rho}^n}/5\right)^2 - |{A_{00}^n}|^2 /5}$.
Then we have
    \begin{equation*}
    	\tilde{\Psi}(\textbf{x},t) = \cos\left(c_2S(t-t_n)\right){\Psi}^n + \frac{\im}{S} \sin\left(c_2S(t-t_n)\right)\Big(\frac{{\rho}^n}{5} \Psi^n - {A_{00}^n}A\bar{{\Psi}}^n \Big).
    \end{equation*}
    Therefore, the exact solution to \eqref{nonlEQ1} reads as follows
    \begin{equation}\label{solution2}
    	\begin{split}
    		\Psi(\textbf{x},t) &= e^{-\im (t-t_n)\big[V + (c_0 +c_2/5) {\rho}^n \big] } e^{-\im c_1 (t-t_n) {\mathbf{F}}^n\cdot \mathbf{f} }\\ &\left[\cos(c_2S(t-t_n)){\Psi}^n + \frac{\im}{S} \sin(c_2S(t-t_n))\Big( \frac{{\rho}^n}{5} \Psi^n - {A_{00}}^nA\bar{\Psi}^n\Big)\right],
    	\end{split}
    \end{equation}
    where $e^{-\im c_1(t-t_n)\mathbf{F}^n\cdot\mathbf{f}}$ is calculated as
    \begin{equation*}
    	\begin{aligned}
    		e^{-\im c_1 (t-t_n)\mathbf{F}^n\cdot\mathbf{f}}=&I_5 + \im\left(\frac{1}{6}\sin(2\kappa)-\frac{4}{3}\sin(\kappa)\right)\frac{\mathbf{F}^n\cdot\mathbf{f}}{|\mathbf{F}^n|}
    		+\left(\frac{4}{3}\cos(\kappa)-\frac{1}{12}\cos(2\kappa)-\frac{5}{4}\right)\frac{(\mathbf{F}^n\cdot\mathbf{f})^2}{|\mathbf{F}^n|^2}\\
    		&+\im\left(\frac{1}{3}\sin(\kappa)-\frac{1}{6}\sin(2\kappa)\right)\frac{(\mathbf{F}^n\cdot\mathbf{f})^3}{|\mathbf{F}^n|^3}
    		+\left(\frac{1}{12}\cos(2\kappa)-\frac{1}{3}\cos(\kappa)+\frac{1}{4}\right)\frac{(\mathbf{F}^n\cdot\mathbf{f})^4}{|\mathbf{F}^n|^4}
    	\end{aligned}
    \end{equation*}
    with $\kappa=c_1|\mathbf{F}^n|(t-t_n)$.

\subsection{High-order compact splitting Fourier spectral method}\label{sec4.3}
In this subsection, we construct high-order time marching schemes
based on operator composition techniques. 
Let
$
\Psi(t)=e^{-\im(t-t_n)\mathcal{A}}\Psi^n$ and $\Psi(t)=e^{-\im(t-t_n)\mathcal{B}}\Psi^n
$
denotes the solutions to linear subproblem and nonlinear subproblem respectively. In principle, 
high-order splitting schemes can be constructed as \cite{Y1990}
\[
\Psi^{n + 1}=\left(\prod_{j = 1}^{m}e^{-\im a_j\tau\mathcal{A}}e^{-\im b_j\tau\mathcal{B}}\right)\Psi^n,
\]
where the coefficients $a_j$ and $b_j~(j=1,\cdots,m)$ are chosen properly.  Here, we present two commonly used splitting schemes.
\begin{itemize}
	\item \textbf{Second-order Strang splitting}: $m=2$, $a_1=a_2=1/2$, $b_1=1$, $b_2=0$,
	\item \textbf{Fourth-order symplectic time integrator}: $m=4$, $a_1=a_4=\frac{1}{2(2-2^{1/3})}$, $a_2=a_3=\frac{1-2^{1/3}}{2(2-2^{1/3})}$, $b_1=b_3=\frac{1}{2-2^{1/3}}$, $b_2=-\frac{2^{1/3}}{2-2^{1/3}}$, $b_4=0$.
\end{itemize}

In practice, from time $t=t_n$ to $t=t_{n+1}$, we combine the splitting steps via the standard Strang splitting and present detailed step-by-step algorithm in Algorithm \ref{Alg2}.
\begin{algorithm}
	\caption{Second-order compact splitting Fourier spectral method}
	\label{Alg2}
	\KwIn{Initial data $\Psi^{n}$ and time step $\tau$.}
	\begin{algorithmic}[1]
		\State Solve the nonlinear subproblem by \eqref{solution2} for half time step $\tau/2$ with initial data $\Psi^n$ to obtain $\Psi^{\ast} $.
		\State Solve the linear subproblem for time step $\tau$ with the data $\Psi^{\ast}=\left(\psi_2^{\ast},\psi_1^{\ast},\psi_0^{\ast},\psi_{-1}^{\ast},\psi_{-2}^{\ast}\right)^{\top}$:
		\begin{itemize}
			\item Compute the mapping  $\phi_{\ell}^n=e^{-\im\ell\Omega t}\psi^{\ast}_{\ell}(\mathcal{R}(t_n)\mathbf{x})$ by \eqref{RSDA1}. 
		    \item Solve the ODEs \eqref{linearEQ2} to obtain $\phi^{n+1}_{\ell}$ using \eqref{sl1}.
		    \item Compute the mapping $\psi^{\ast\ast}_{\ell}=e^{\im\ell\Omega t}\phi^{n+1}_{\ell}(\mathcal{R}^{-1}(t_{n+1})\mathbf{x})$ by \eqref{RSDA2}.
		\end{itemize}
		\State Solve the nonlinear subproblem by \eqref{solution2} for half time step $\tau/2$ with the data $\Psi^{\ast\ast}=\left(\psi_2^{\ast\ast},\cdots,\psi_{-2}^{\ast\ast}\right)^{\top}$ to obtain the final solution $\Psi^{n+1}$.
	\end{algorithmic}
	\KwOut{Numerical solution $\Psi^{n+1}$.}
\end{algorithm}

\begin{remark}[Efficiency]\label{NlogN}
	In Algorithm 1, we can reduce the number of Fourier-Physical switches by merging adjacent computation. As a result, only  $30N$ pairs of one-dimensional FFTs and iFFTs are required in step 2, and the complexity is $O(N^2 \log N)$.  For the 3D case, the schemes requires $35 N^2$ pairs of one-dimensional FFTs and iFFTs, and the complexity becomes $O(N^3 \log N)$.
\end{remark}

\subsection{Stability}
In this subsection, we prove that the proposed method preserves the conservation of both mass and magnetization at the discrete level. Without loss of generality, we present proofs in the 2D case and extension to 3D is straightforward.
We define the discrete $l^2$-norm of $\psi_{\ell}^n$ as
$ \| \psi_{\ell}^n \|_{l^2} = (h^2 \sum_{j=0}^{N-1} \sum_{k=0}^{N-1}|\psi^n_{\ell,jk}|^2)^{\frac{1}{2}}$,
and define $\|\Psi^n\|_{l^2}:=\left(\sum_{\ell=-2}^{2}\|\psi_{\ell}^n\|_{l^2}^2\right)^{\frac{1}{2}}$.

    \begin{lem}[Stability]
    	The mass $\mathcal{N}(\Psi)$ is conserved at discrete level. In fact, for any $h$, $\tau>0$, we have
    	$$\|\Psi^n\|_{l^2}^2=\|\Psi^0|_{l^2}^2.$$
    	This reveals that the compact splitting Fourier spectral method is unconditionally stable.
    \end{lem}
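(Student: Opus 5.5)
The plan is to show that each substep of Algorithm \ref{Alg2} (and likewise of the fourth-order composition, which uses the same substeps with different step lengths) preserves $\|\cdot\|_{l^2}$. Induction on $n$ then gives $\|\Psi^n\|_{l^2}=\|\Psi^0\|_{l^2}$. There are two substeps to check: the nonlinear flow \eqref{solution2} and the linear flow, which consists of the mapping \eqref{RSDA1}, the Fourier propagation \eqref{sl1} and the inverse mapping \eqref{RSDA2}.

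\textbf{Nonlinear step.} The ODE \eqref{nonlEQ1} is pointwise at each grid point $\mathbf{x}_{jk}$, so it suffices to show $|\Psi(\mathbf{x}_{jk},t)|^2=\Psi^{\mathsf{H}}\Psi$ is constant in $t$. This is exactly $\partial_t\rho=0$, computed just before \eqref{F_alpha} using the Hermiticity of $\mathbf{F}\cdot\mathbf{f}$ and the identity $\overline{A_{00}}\,\Psi^{\top}A^{\mathsf{H}}\Psi=\overline{A_{00}\Psi^{\mathsf{H}}A\bar\Psi}$. Since \eqref{solution2} is the exact solution of that ODE, $\rho(\mathbf{x}_{jk},t_n+\tau/2)=\rho^n(\mathbf{x}_{jk})$. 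Summing over the grid with weight $h^2$ gives norm preservation.

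One subtlety is the case $S^2<0$ (or $S=0$), where $S$ is imaginary. Here I will note that the formula is still the exact flow, interpreted via the analytic continuation of $\sin(c_2St)/S$, so the argument does not depend on the form of the formula.

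\textbf{Linear step.} By Parseval's identity for the discrete Fourier transform, $\|\phi_\ell\|_{l^2}^2=(2L)^2\sum_{pq}|\widehat{\phi}_{\ell,pq}|^2$, so it suffices to work with the Fourier coefficients. The matrix $Q$ is Hermitian: its diagonal is real and the off-diagonal entries pair as $\gamma a$ and $\gamma\bar a$ with matching $\sqrt6/2$ factors. Hence $e^{\im\tau Q}$ is unitary, and $e^{-\frac12\im(\nu_p^2+\nu_q^2)\tau}$ is a unimodular scalar. Therefore $\sum_\ell|\widehat{\phi}_{\ell,pq}^{n+1}|^2=\sum_\ell|\widehat{\phi}^{n}_{\ell,pq}|^2$ for every $(p,q)$, and summing gives $\|\Phi^{n+1}\|_{l^2}=\|\Phi^n\|_{l^2}$.

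\textbf{Mappings \eqref{RSDA1}--\eqref{RSDA2} (main obstacle).} The phase factors $e^{\mp\im\ell\Omega t}$ are unimodular, so they do not change the norm. Each shear $e^{a y\partial_x}$ is implemented by a one-dimensional FFT in $x$ for each fixed $y_k$, multiplying the coefficients by $e^{\im a y_k\nu_p}$, followed by an iFFT. This is unitary on each grid line by the one-dimensional Parseval identity, and hence preserves the full $l^2$ norm; the same holds for $e^{bx\partial_y}$. The composition of three unitary shears is unitary. The main point to verify is that the discrete RSDA of \cite{LY2025} is realized exactly in this way, i.e.\ spectrally with unimodular multipliers and no interpolation or truncation. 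This is what makes the discrete rotation norm-preserving, whereas an interpolation-based rotation would not be.

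Chaining the three substeps gives $\|\Psi^{n+1}\|_{l^2}=\|\Psi^n\|_{l^2}$ for any $h,\tau>0$, which proves the lemma.
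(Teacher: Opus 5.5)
Your proposal is correct and follows the paper's proof. For the nonlinear step you use pointwise conservation of $\rho$ along the exact flow, which the paper phrases as $\Psi^{n+1}=e^{-\im\tau\mathcal{B}_1^n}\mathbf{b}^n$ with Hermitian $\mathcal{B}_1^n$ and $|\mathbf{b}^n|^2=\rho^n$; for the linear step you use Parseval with the unitary $e^{\im\tau Q}$ and the unitary spectral shears, which the paper compresses into $\|\Psi^{n+1}\|_{l^2}=\|\Phi^{n+1}\|_{l^2}=\|\Phi^n\|_{l^2}=\|\Psi^n\|_{l^2}$ by appeal to the RSDA methodology of Liu et al.

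One side remark: the case $S^2<0$ never occurs, because $\|A\|_2=1/\sqrt5$ gives $|A_{00}|\le\rho/\sqrt5$ and hence $S^2\ge0$; only $S=0$ needs the limiting interpretation of $\sin(c_2S\tau)/S$.
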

    \begin{prf}
        For the nonlinear subproblem \eqref{nonlEQ1}, the exact solution \eqref{solution2} can be rewritten as 
        $$\Psi^{n+1}=e^{-\im\tau \mathcal{B}_1^n}\mathbf{b}^n, $$
        where $\mathcal{B}_1^n:=(V+(c_0+c_2/5)\rho^n)I_5+c_1\mathbf{F}^n\cdot\mathbf{f}$ and $\mathbf{b}^n:=\cos(c_2S\tau){\Psi}^n + \frac{\im}{S} \sin(c_2S\tau)\big( \frac{{\rho}^n}{5} \Psi^n - {A_{00}}^nA\bar{\Psi}^n\big)$. Using $(\mathcal{B}_1^n)^{\mathsf{H}}=\mathcal{B}_1^n$, a simple calculation shows that 
        \begin{equation*}
        	\|\Psi^{n+1}\|_{l^2}^2	=h^2\sum_{j=0}^{N-1}\sum_{k=0}^{N-1}\left(\left(\mathbf{b}^n\right)^{\mathsf{H}}e^{\im(\mathcal{B}_1^n)^{\mathsf{H}}\tau} e^{-\im\mathcal{B}_1^n\tau}\mathbf{b}^n\right)_{jk}
        	=h^2\sum_{j=0}^{N-1}\sum_{k=0}^{N-1}\left(\rho^n\right)_{jk}=\|\Psi^{n}\|_{l^2}^2,
        \end{equation*}
        where $(f^n)_{jk}:=f(x_j,y_k,t_n)$.
While for the linear subproblem, we only need to consider \eqref{RSDA1}, \eqref{RSDA2} and \eqref{sl1}. Based on Parseval's identity and following the methodology in \cite{LY2025}, we have 
\bea
\label{MassConLine}
\|\Psi^{n+1}\|^2_{l^2}=\|\Phi^{n+1}\|^2_{l^2}=\|\Phi^{n}\|^2_{l^2}=\|\Psi^{n}\|^2_{l^2}.
\eea
        This completes the proof.

    \end{prf}

    \begin{lem}[Magnetization conservation]
    	The magnetization $\mathcal{M}(\Psi)$ is conserved at discrete level when $\gamma=0$. In fact, for any $h$, $\tau>0$, we have
    $$\sum_{\ell=-2}^{2}\ell\|\psi_{\ell}^n\|_{l^2}^2=\sum_{\ell=-2}^{2}\ell\|\psi_{\ell}^0\|_{l^2}^2.$$
    \end{lem}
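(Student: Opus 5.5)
The plan is to show that, when $\gamma=0$, each of the two substeps of the compact splitting scheme preserves the discrete magnetization $\mathcal{M}_h(\Psi):=\sum_{\ell=-2}^{2}\ell\|\psi_\ell\|_{l^2}^2$ exactly. The statement then follows for the Strang splitting of Algorithm \ref{Alg2}, and for the fourth-order composition, by induction on the time steps. Since $h^2\sum_{j,k}(F_z)_{jk}=\mathcal{M}_h(\Psi)$ by the explicit form $F_z=\sum_\ell \ell|\psi_\ell|^2$, I will phrase the nonlinear step in terms of $F_z$.

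\emph{Linear substep.} For $\gamma=0$ the step acts on each component separately, so it will conserve every $\|\psi_\ell\|_{l^2}$ individually. In order:
\begin{itemize}
\item The mapping \eqref{RSDA1} is a unimodular phase $e^{-\im\ell\Omega t_n}$ composed with three shear operators $e^{a y\partial_x}$, $e^{bx\partial_y}$.
\item Each shear is implemented by one-dimensional FFTs as a Fourier multiplier of modulus one along each grid line. By Parseval's identity (as in \cite{LY2025}), it is an isometry in $l^2$ applied to each component separately.
\item The same holds for \eqref{RSDA2}.
\item In \eqref{sl1} the matrix $e^{\im\tau Q}$ reduces to the diagonal matrix \eqref{NonSOCMat}, whose entries have modulus one. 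The factor $e^{-\frac12\im(\nu_p^2+\nu_q^2)\tau}$ also has modulus one.
\item Parseval therefore gives $\|\phi_\ell^{n+1}\|_{l^2}=\|\phi_\ell^n\|_{l^2}$, hence $\|\psi_\ell^{\ast\ast}\|_{l^2}=\|\psi_\ell^{\ast}\|_{l^2}$ for each $\ell$.
\end{itemize}
Multiplying by $\ell$ and summing gives conservation of $\mathcal{M}_h$ through this substep.

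\emph{Nonlinear substep.} The components are genuinely mixed here, by $c_1\mathbf{F}\cdot\mathbf{f}$ and by the $A_{00}A\bar\Psi$ term, so individual norms are not conserved. Instead I will use the pointwise invariance established in \eqref{F_alpha}: along the exact flow of \eqref{nonlEQ1}, $\partial_t F_\alpha=0$ at every $\mathbf{x}$, in particular for $\alpha=z$. Since \eqref{solution2} is the exact solution of this pointwise ODE evaluated at each grid point, I obtain $(F_z(\Psi^{n+1}))_{jk}=(F_z(\Psi^n))_{jk}$ for all $(j,k)$. Summing with weight $h^2$ yields $\mathcal{M}_h(\Psi^{n+1})=\mathcal{M}_h(\Psi^n)$. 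Notably, this substep does not need $\gamma=0$.

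\emph{Main obstacle.} The delicate point is making sure the invariance of $F_z$ really holds for the closed formula \eqref{solution2}, and not merely for the continuous ODE. The argument relies on \eqref{solution2} being the exact solution, which in turn rests on the identities $[f_x,f_y]=\im f_z$ (and cyclic), on $\Psi^{\top}Af_z\Psi=0$, and on $(\mathbf{F}^n\cdot\bar{\mathbf{f}})A+A(\mathbf{F}^n\cdot\mathbf{f})=0$, all stated earlier. If a direct check is preferred, I would first verify $\bar{\Psi}^{\top}f_zA\bar\Psi=0$ for the $A_{00}$ contribution, namely $\custombar{A_{00}}\Psi^{\top}Af_z\Psi-A_{00}\Psi^{\mathsf{H}}f_zA\bar\Psi=0$. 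This holds because $f_zA$ is antisymmetric: $A$ pairs index $i$ with $6-i$, and $f_z$ has opposite diagonal entries at those indices. Once each substep preserves $\mathcal{M}_h$, induction over $n$ gives the claim for any $h,\tau>0$.
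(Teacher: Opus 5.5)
Your proposal is correct and follows essentially the same route as the paper. For $\gamma=0$ the linear step acts componentwise and unitarily: $e^{\im\tau Q}$ is diagonal with unimodular entries, and Parseval handles the phase factor and the RSDA shears. The nonlinear step preserves $F_z=\sum_\ell \ell|\psi_\ell|^2$ pointwise by \eqref{F_alpha}, so summing over the grid gives conservation of $\sum_\ell \ell\|\psi_\ell\|_{l^2}^2$. Your extra check that $f_zA$ is antisymmetric, so that the $c_2$ contribution to $\partial_t F_z$ vanishes, fills in a fact the paper only asserts.
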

    \begin{prf}
    	For the linear subproblem \eqref{linearEQ1}, we know that $e^{\im \tau Q}$ is a diagonal unitary matrix when $\gamma=0$, as shown in \eqref{NonSOCMat}. Therefore, it follows from \eqref{MassConLine} that
    	$$\big\|\psi_{\ell}^{n+1}\big\|_{l^2}^2=\big\|\psi_{\ell}^{n}\big\|_{l^2}^2.$$
    	For the nonlinear subproblem \eqref{nonlEQ1}, with the help of \eqref{F_alpha}, we have
    	\begin{equation*}
    		\partial_t\left(\sum_{\ell=-2}^{2}\ell\big\|\psi_{\ell}\big\|_{l^2}^2\right)=h^2\sum_{j=0}^{N-1}\sum_{k=0}^{N-1}\left(\partial_tF_z\right)_{jk}=0.
    	\end{equation*}
    This implies that
$$\sum_{\ell=-2}^{2}\ell\big\|\psi_{\ell}^{n+1}\big\|_{l^2}^2=\sum_{\ell=-2}^{2}\ell\big\|\psi^n_{\ell}\big\|_{l^2}^2.$$
    	The proof is completed.
    \end{prf}

\section{Numerical results}\label{sec-nr}
    In this section, we begin by evaluating the performance of our numerical method through the temporal/spatial accuracies and efficiency test. We then apply it to characterize the dynamical laws, including the conservation of mass, energy, magnetization, along with the evolution of angular momentum expectation and condensate widths. Finally, we investigate some interesting phenomena, such as the effect of SOC on dynamics and the dynamics of the vortex lattice. In the following simulations, the external potential $V(\mathbf{x})$ is chosen as \eqref{harmonic} with $\gamma_x=\gamma_y=1$ unless stated otherwise. For convenience, throughout this section we denote the second-order/fourth-order Fourier spectral method as \textbf{TS2}/\textbf{TS4}.
\subsection{Accuracy confirmation.}

    In this section, we define the numerical errors as
    $$E_{\ell}^{h,\tau}=\big\|\psi_{\ell}^{\mathrm{ref}}-\psi_{\ell}^{h,\tau}\big\|_{l^2}/\big\|\psi_{\ell}^{\mathrm{ref}}\big\|_{l^2},\quad \ell=-2,-1,0,1,2,$$
    where $\psi_{\ell}^{\mathrm{ref}}$ is a high-accuracy reference solution, and $\psi_{\ell}^{h,\tau}$ is a numerical solution obtained with the time step $\tau$ and the spatial mesh size $h$.

    To confirm the temporal convergence, we compute the wave function with a small mesh size $h_0=2^{d-7}$, and the reference solution $\psi_{\ell}^{\mathrm{ref}}$ is obtained by TS4 with mesh size $h_0$ and a small time step $\tau_0=10^{-4}$. To confirm the spatial convergence, we compute the wave function with different mesh size $h$ and the reference solution is obtained by TS2/TS4 with $h_0$ and $\tau_0$.


    \begin{exmp}[\textbf{\textit{Accuracy}}]\label{accuracy}
        In this example, we test the temporal and spatial accuracy in both 2D and 3D cases. To this end, We consider the following two cases
    	\begin{itemize}
    		\item $\mathrm{\mathbf{2D~case:}}$ $c_0=100$, $c_1=-1$, $c_2=1$, $\Omega=0.2$, $\gamma=0.3$.
    		\item $\mathrm{\mathbf{3D~case:}}$ $c_0=10$, $c_1=-1$, $c_2=1$, $\Omega=0.2$, $\gamma=0.1$.
    	\end{itemize}
        \noindent The initial functions are
    	\begin{equation}\label{ini1}
    		\psi_{\pm2}^0(\mathbf{x})=\phi(\mathbf{x}),\quad\psi_{\pm1}^0(\mathbf{x})=\phi(\mathbf{x}),\quad\psi_0^0(\mathbf{x})=2\phi(\mathbf{x}),
    	\end{equation}
    	where $\phi(\mathbf{x})=e^{-|\mathbf{x}|^2/2}/\left(\sqrt{8}\pi^{d/4}\right)$.
    \end{exmp}

    \begin{table}[h!]
    	\centering
    	\caption{\footnotesize Numerical errors of TS2 and TS4 at time $t=0.5$ for 2D case in \textbf{Example \ref{accuracy}}.}
    	\label{Acc1}
    	{\small\begin{tabular}{ccccccc}
    		\toprule
    		\multirow{19}{*}{\raisebox{-12ex}{\begin{tabular}[c]{@{}c@{}}Temporal \\ direction\end{tabular}}}
    		&  & $\tau$ & 1/80 & 1/160 & 1/320 & 1/640 \\ \cmidrule{2-7}
    		& \multirow{9}{*}{\raisebox{-3ex}{TS2}} & $E_2^{h_0,\tau}$ & 8.4374E-04 & 2.1064E-04 & 5.2641E-05 & 1.3159E-05 \\
    		&                      & rate & & 2.0020 & 2.0005 & 2.0002 \\
    		&                      & $E_1^{h_0,\tau}$ & 8.4254E-04 & 2.1042E-04 & 5.2592E-05 & 1.3147E-05 \\
    		&                      & rate & & 2.0015 & 2.0004 & 2.0001 \\
    		&                      & $E_0^{h_0,\tau}$ & 5.4795E-04 & 1.3690E-04 & 3.4222E-05 & 8.5552E-06 \\
    		&                      & rate & & 2.0008 & 2.0002 & 2.0001 \\
    		&                      & $E_{-1}^{h_0,\tau}$ & 8.3759E-04 & 2.0918E-04 & 5.2282E-05 & 1.3069E-05 \\
    		&                      & rate & & 2.0015 & 2.0004 & 2.0001 \\
    		&                      & $E_{-2}^{h_0,\tau}$ & 8.3704E-04 & 2.0896E-04 & 5.2224E-05 & 1.3054E-05 \\
    		&                      & rate & & 2.0020 & 2.0005 & 2.0001 \\ \cmidrule{2-7}
    		& \multirow{9}{*}{\raisebox{-3ex}{TS4}} & $E_2^{h_0,\tau}$ & 4.1380E-05 & 2.6390E-06 & 1.6583E-07 & 1.0379E-08 \\
    		&                      & rate & & 3.9708 & 3.9922 & 3.9980 \\
    		&                      & $E_1^{h_0,\tau}$ & 3.4689E-05 & 2.2035E-06 & 1.3833E-07 & 8.6550E-09 \\
    		&                      & rate & & 3.9766 & 3.9937 & 3.9984 \\
    		&                      & $E_0^{h_0,\tau}$ & 2.1960E-05 & 1.3992E-06 & 8.7900E-08 & 5.2021E-09 \\
    		&                      & rate & & 3.9722 & 3.9925 & 3.9978 \\
    		&                      & $E_{-1}^{h_0,\tau}$ & 3.4512E-05 & 2.1923E-06 & 1.3762E-07 & 8.6109E-09 \\
    		&                      & rate & & 3.9766 & 3.9937 & 3.9984 \\
    		&                      & $E_{-2}^{h_0,\tau}$ & 4.0945E-05 & 2.6112E-06 & 1.6408E-07 & 1.0269E-08 \\
    		&                      & rate & & 3.9709 & 3.9922 & 3.9980 \\
    		\toprule
    		\multirow{11}{*}{\raisebox{-12ex}{\begin{tabular}[c]{@{}c@{}}Spatial \\ direction\end{tabular}}}
    		& & $h$ & 1/2  & 1/4  & 1/8   & 1/16  \\ \cmidrule{2-7}
    		& \multirow{5}{*}{\raisebox{-2.5ex}{TS2}} & $E_2^{h,\tau_0}$ & 2.1323E-01 & 3.2161E-03 & 4.5656E-08 & 1.3003E-12 \\ [2.5pt]
    		&                      & $E_1^{h,\tau_0}$ & 2.1990E-01 & 3.4138E-03 & 3.8468E-08 & 1.2759E-12 \\ [2.5pt]
    		&                      & $E_0^{h,\tau_0}$ & 1.7801E-01 & 2.3852E-03 & 3.5863E-08 & 1.3317E-12 \\ [2.5pt]
    		&                      & $E_{-1}^{h,\tau_0}$ & 2.2702E-01 & 3.5795E-03 & 4.2493E-08 & 1.2192E-12 \\ [2.5pt]
    		&                      & $E_{-2}^{h,\tau_0}$ & 2.1287E-01 & 3.4759E-03 & 5.0031E-08 & 1.2682E-12 \\ \cmidrule{2-7}
    		& \multirow{5}{*}{\raisebox{-2.5ex}{TS4}} & $E_2^{h,\tau_0}$ & 2.1323E-01 & 3.2161E-03 & 4.5658E-08 & 3.7094E-12 \\ [2.5pt]
    		&                      & $E_1^{h,\tau_0}$ & 2.1990E-01 & 3.4138E-03 & 3.8470E-08 & 3.5924E-12 \\ [2.5pt]
    		&                      & $E_0^{h,\tau_0}$ & 1.7801E-01 & 2.3852E-03 & 3.5864E-08 & 3.6499E-12 \\ [2.5pt]
    		&                      & $E_{-1}^{h,\tau_0}$ & 2.2702E-01 & 3.5795E-03 & 4.2494E-08 & 3.3506E-12 \\ [2.5pt]
    		&                      & $E_{-2}^{h,\tau_0}$ & 2.1287E-01 & 3.4759E-03 & 5.0032E-08 & 3.4201E-12 \\
    		\bottomrule
    	\end{tabular}}
    \end{table}

    \begin{table}[h!]
    	\centering
    	\caption{\footnotesize Numerical errors of TS2 and TS4 at time $t=0.3$ for 3D case in \textbf{Example \ref{accuracy}}.}
    	\label{Acc2}
    	{\small\begin{tabular}{ccccccc}
    			\toprule
    			\multirow{19}{*}{\raisebox{-12ex}{\begin{tabular}[c]{@{}c@{}}Temporal \\ direction\end{tabular}}}
    			&  & $\tau$ & 1/40 & 1/80 & 1/160 & 1/320 \\ \cmidrule{2-7}
    			& \multirow{9}{*}{\raisebox{-3ex}{TS2}} & $E_2^{h_0,\tau}$ & 7.3666E-05 & 1.8405E-05 & 4.6005E-06 & 1.1501E-06 \\
    			&                      & rate & $*$ & 2.0009 & 2.0002 & 2.0001 \\
    			&                      & $E_1^{h_0,\tau}$ & 4.5933E-05 & 1.1477E-05 & 2.8688E-06 & 7.1718E-07 \\
    			&                      & rate & $*$ & 2.0008 & 2.0002 & 2.0001 \\
    			&                      & $E_0^{h_0,\tau}$ & 6.9868E-05 & 1.7456E-05 & 4.3633E-06 & 1.0908E-06 \\
    			&                      & rate & $*$ & 2.0009 & 2.0002 & 2.0001 \\
    			&                      & $E_{-1}^{h_0,\tau}$ & 4.5752E-05 & 1.1432E-05 & 2.8575E-06 & 7.1436E-07 \\
    			&                      & rate & $*$ & 2.0008 & 2.0002 & 2.0001 \\
    			&                      & $E_{-2}^{h_0,\tau}$ & 7.3626E-05 & 1.8395E-05 & 4.5980E-06 & 1.1495E-06 \\
    			&                      & rate & $*$ & 2.0009 & 2.0002 & 2.0001 \\ \cmidrule{2-7}
    			& \multirow{9}{*}{\raisebox{-3ex}{TS4}} & $E_2^{h_0,\tau}$ & 5.8017E-07 & 3.7139E-08 & 2.3356E-09 & 1.4621E-10 \\
    			&                      & rate & $*$ & 3.9655 & 3.9911 & 3.9976 \\
    			&                      & $E_1^{h_0,\tau}$ & 3.2720E-07 & 2.1001E-08 & 1.3215E-09 & 8.2668E-11 \\
    			&                      & rate & $*$ & 3.9617 & 3.9902 & 3.9987 \\
    			&                      & $E_0^{h_0,\tau}$ & 5.5258E-07 & 3.5367E-08 & 2.2241E-09 & 1.3925E-10 \\
    			&                      & rate & $*$ & 3.9657 & 3.9911 & 3.9975 \\
    			&                      & $E_{-1}^{h_0,\tau}$ & 3.2671E-07 & 2.0969E-08 & 1.3195E-09 & 8.2570E-11 \\
    			&                      & rate & $*$ & 3.9617 & 3.9902 & 3.9983 \\
    			&                      & $E_{-2}^{h_0,\tau}$ & 5.8010E-07 & 3.7134E-08 & 2.3353E-09 & 1.4621E-10 \\
    			&                      & rate & $*$ & 3.9655 & 3.9911 & 3.9975 \\
    			\toprule
    			\multirow{11}{*}{\raisebox{-12ex}{\begin{tabular}[c]{@{}c@{}}Spatial \\ direction\end{tabular}}}
    			& & $h$ & 1  & 1/2  & 1/4   & 1/8  \\ \cmidrule{2-7}
    			& \multirow{5}{*}{\raisebox{-2.5ex}{TS2}} & $E_2^{h,\tau_0}$ & 1.5025E-02 & 2.1334E-04 & 8.0943E-10 & 5.0698E-13 \\ [2.5pt]
    			&                      & $E_1^{h,\tau_0}$ & 1.0510E-02 & 8.4964E-05 & 3.9352E-10 & 4.9918E-13 \\ [2.5pt]
    			&                      & $E_0^{h,\tau_0}$ & 1.4270E-02 & 2.0077E-04 & 7.5453E-10 & 5.0477E-13 \\ [2.5pt]
    			&                      & $E_{-1}^{h,\tau_0}$ & 1.0646E-02 & 8.1368E-05 & 3.7883E-10 & 5.1567E-13 \\ [2.5pt]
    			&                      & $E_{-2}^{h,\tau_0}$ & 1.5169E-02 & 2.1555E-04 & 8.0559E-10 & 5.1528E-13 \\ \cmidrule{2-7}
    			& \multirow{5}{*}{\raisebox{-2.5ex}{TS4}} & $E_2^{h,\tau_0}$ & 1.5025E-02 & 2.1334E-04 & 8.1061E-10 & 1.5136E-12 \\ [2.5pt]
    			&                      & $E_1^{h,\tau_0}$ & 1.0510E-02 & 8.4964E-05 & 3.9306E-10 & 1.5238E-12 \\ [2.5pt]
    			&                      & $E_0^{h,\tau_0}$ & 1.4270E-02 & 2.0077E-04 & 7.5567E-10 & 1.5391E-12 \\ [2.5pt]
    			&                      & $E_{-1}^{h,\tau_0}$ & 1.0646E-02 & 8.1368E-05 & 3.7846E-10 & 1.5625E-12 \\ [2.5pt]
    			&                      & $E_{-2}^{h,\tau_0}$ & 1.5169E-02 & 2.1555E-04 & 8.0677E-10 & 1.5388E-12 \\
    			\bottomrule
    	\end{tabular}}
    \end{table}

    Table \ref{Acc1} presents the temporal and spatial errors computed by TS2 and TS4 at time $t=0.5$ on the computational domain $\mathcal{D}=\left[-12,12\right]^2$ for the 2D case, while Table \ref{Acc2} presents those at time $t=0.3$ on the computational domain $\mathcal{D}=\left[-8,8\right]^3$ for the 3D case. Tables \ref{Acc1}-\ref{Acc2} demonstrate that TS2/TS4 achieves second/fourth order accuracy in time and exhibits spectral accuracy in space. Furthermore, a higher order operator splitting scheme is made possible owing to the exact integrability of both subproblems.

\subsection{Efficiency test}
  	In this subsection, we compare the computational costs of our method and that proposed in \cite{LY2025}. In addition, we illustrate the efficiency of our numerical method by characterizing the functional relationship between the computational costs and the discrete problem size. For convenience, we denote our method as \textbf{M1} and the reference method as \textbf{M2}.
  	
  	
    \begin{exmp}[\textbf{\textit{Efficiency}}]\label{efficiency}
    	In this example, we test the computational costs for different total grid number $N_{tot}:=N^d$ in both 2D and 3D cases, using M1 and M2 method. We set the parameters $c_0=10$, $c_1=1$, $c_2=1$, $\Omega=0.2$, $\gamma=0.1$, and the initial condition \eqref{ini1}. The computational cost is defined as the CPU time(s) consumed to advance the simulations from time $t=0$ to $t=0.1$. The simulations were coded in FORTRAN, and executed on the dual-socket 3.00GHz Intel(R) Xeon(R) Gold 6248R CPUs with a 35.75 MB L3 cache in Ubuntu 18.04.6 LTS with the Intel compiler ifort.
    \end{exmp}
    \begin{table}[h!]
    	\centering
    	\caption{\footnotesize Timing results (in seconds) with the total grid number $N_{tot}$ by M1 method for $d$-dimensional cases in \textbf{Example \ref{efficiency}}.}
    	\label{Eff1}
    	{\begin{tabular}{clcccc}
    			\toprule
    			& $N_{tot}$ & $64^d$ & $128^d$ & $192^d$ & $256^d$ \\
    			\midrule
    			\multirow{2}{*}{\centering $d=2$}
    			& TS2 & 0.68 & 2.99 & 8.12 & 17.77\\
    			& TS4 & 1.57 & 7.02 & 18.03 & 40.71\\
    			\midrule
    			\multirow{2}{*}{\centering $d=3$}
    			& TS2 & 75.22 & 622.01 & 2449.06 & 6137.38\\
    			& TS4 & 172.46 & 1643.12 & 5671.32 & 14688.66\\
    			\bottomrule
    	\end{tabular}}
    \end{table}
    \begin{table}[h!]
    	\centering
    	\caption{\footnotesize Timing results (in seconds) with the total grid number $N_{tot}$ by M2 method for $d$-dimensional cases in \textbf{Example \ref{efficiency}}.}
    	\label{Eff2}
       {\begin{tabular}{clcccc}
       		\toprule
       		& $N_{tot}$ & $64^d$ & $128^d$ & $192^d$ & $256^d$ \\
       		\midrule
       		\multirow{2}{*}{\centering $d=2$}
       		& TS2 & 0.65 & 2.73 & 7.04 & 15.99\\
       		& TS4 & 1.48 & 6.22 & 16.12 & 36.51\\
       		\midrule
       		\multirow{2}{*}{\centering $d=3$}
       		& TS2 & 69.28 & 564.38 & 2253.20 & 5663.66\\
       		& TS4 & 159.21 & 1516.94 & 5225.02 & 13462.51\\
       		\bottomrule
       \end{tabular}}
    \end{table}
    \begin{figure}[h]
    	\centering
    	\includegraphics[scale=0.43]{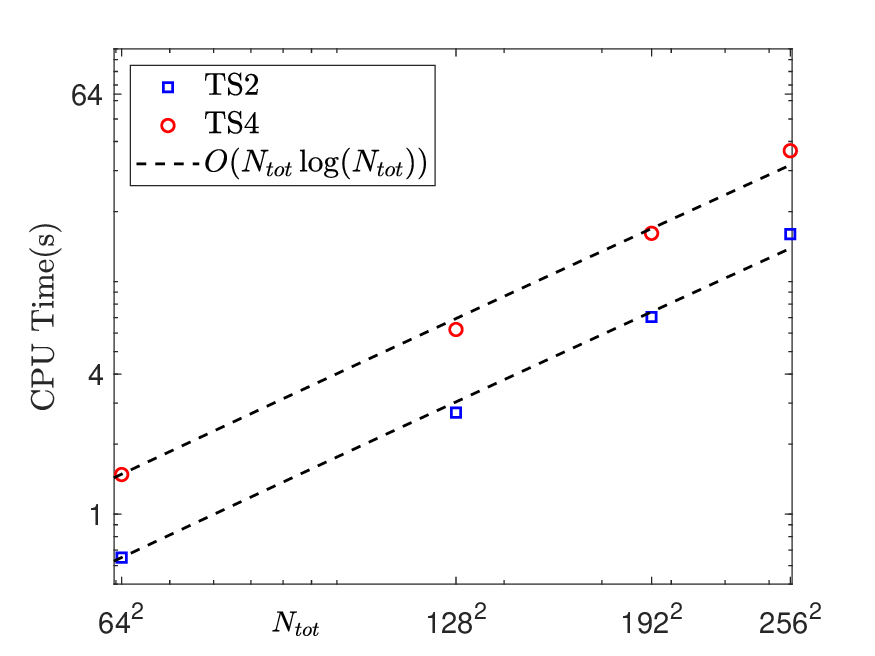}
    	\includegraphics[scale=0.43]{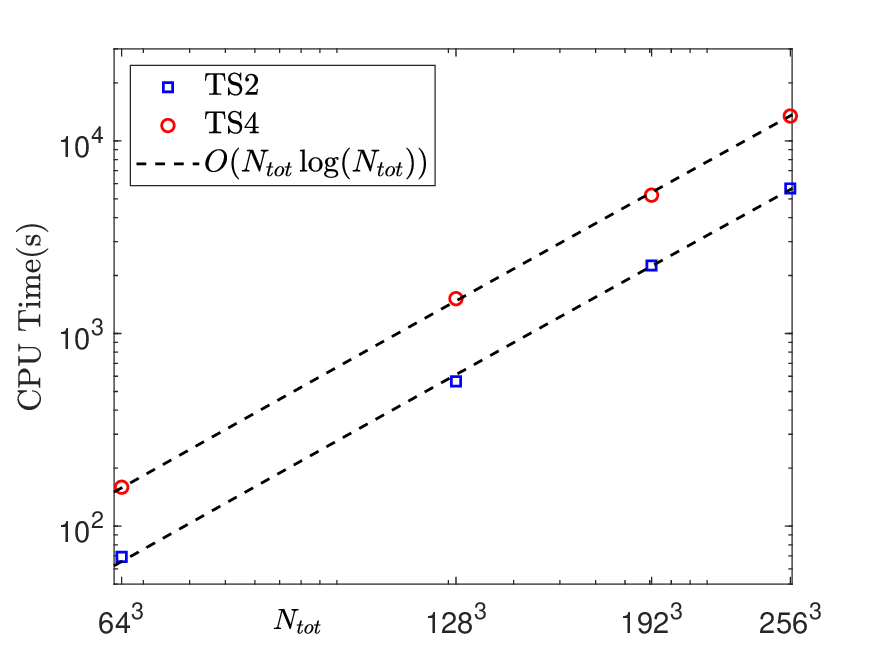}
    	\caption{Log-Log plots of timing results with the total grid $N_{tot}$ by TS2 and TS4 for both 2D (left) and 3D (right) cases in \textbf{Example \ref{efficiency}}.}
    	\label{Eff3}
    \end{figure}

    Table \ref{Eff1} presents the computational costs obtained by the M1 method using TS2 and TS4 with a time step $\tau = 10^{-3}$ on the computational domain $\mathcal{D} = [-12, 12]^d$, for various total grid sizes $N_{tot}$ in both 2D and 3D cases. Table \ref{Eff2} reports the corresponding results obtained by the M2 method under the same setting. Figure \ref{Eff3} illustrates the results from Table \ref{Eff2} via Log-Log plots.
	A comparison between Table \ref{Eff1} and Table \ref{Eff2} exhibits that the computational costs of M2 are less than that of M1, which aligns with Remark \ref{comput-cost}.
    As shown in Table \ref{Eff2} and Figure \ref{Eff3}, the computational cost scales approximately as $O(N_{\text{tot}}\log N_{\text{tot}})$, which agrees with the theoretical analysis in Remark \ref{NlogN} and thus confirms the efficiency of our numerical method.

\subsection{Dynamical properties verification.}
    In this subsection, we apply our numerical method to investigate the dynamical laws, including the conservation of mass, energy, and magnetization, as well as the evolution of angular momentum expectation and condensate widths.

    \begin{exmp}\label{properties1}
    	In this example, we numerically characterize the dynamical laws in 2D case. To this end, we take the parameters $c_0 = 120$, $c_1 = 1$, $c_2 = 1$, $\Omega = 0.2$, adopt the harmonic potential $V(\mathbf{x})$ given in \eqref{harmonic}, and consider the following three cases
    	\begin{itemize}
    		\item $\mathrm{\mathbf{Case~i:}}$ $\gamma_x=\gamma_y=1$, $\gamma=0$.
    		\item $\mathrm{\mathbf{Case~ii:}}$ $\gamma_x=\gamma_y=1$, $\gamma=0.9$.
    		\item $\mathrm{\mathbf{Case~iii:}}$ $\gamma_x=1.3$, $\gamma_y=1$, $\gamma=0$.
    	\end{itemize}
    	The initial functions are chosen as
    	\begin{equation}
    		\psi_{\pm2}^0(\mathbf{x})=\phi(\mathbf{x}),\quad\psi_{\pm1}^0(\mathbf{x})=\phi(\mathbf{x}),\quad\psi_0^0(\mathbf{x})=6\sqrt{7}\phi(\mathbf{x}),
    	\end{equation}
    	where $\phi(\mathbf{x})=e^{-|\mathbf{x}|^2/8}/\left(32\sqrt{\pi}\right)$.
    	The computational domain, computational time interval, mesh size and time step are respectively set as $\mathcal{D}=\left[-24,24\right]^2$, $t\in[0,12]$, $h=1/16$ and $\tau=10^{-3}$.
    \end{exmp}

    \begin{figure}[h]
    	\centering
    	\includegraphics[scale=0.43]{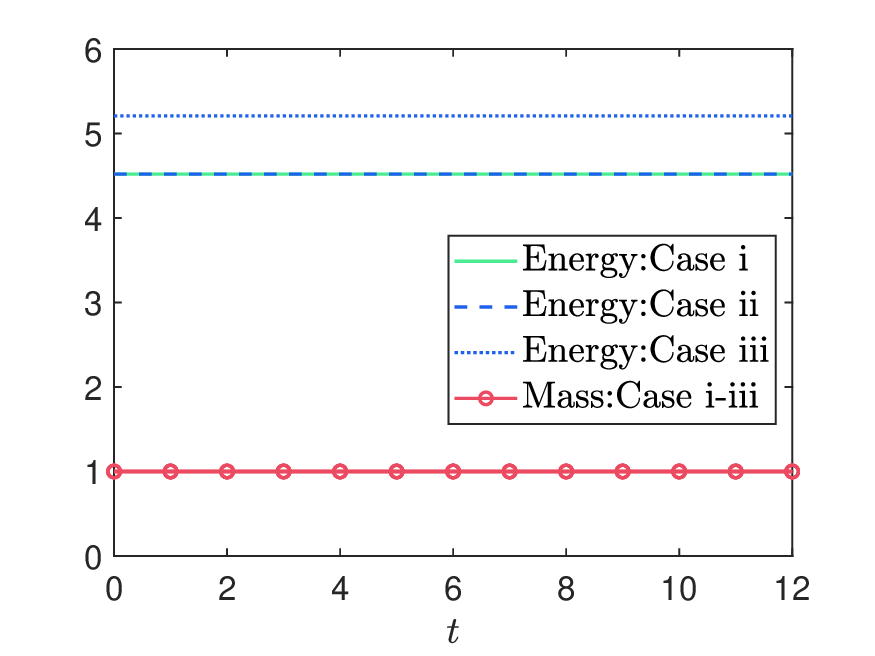}
    	\hspace{-0.3cm}
    	\includegraphics[scale=0.43]{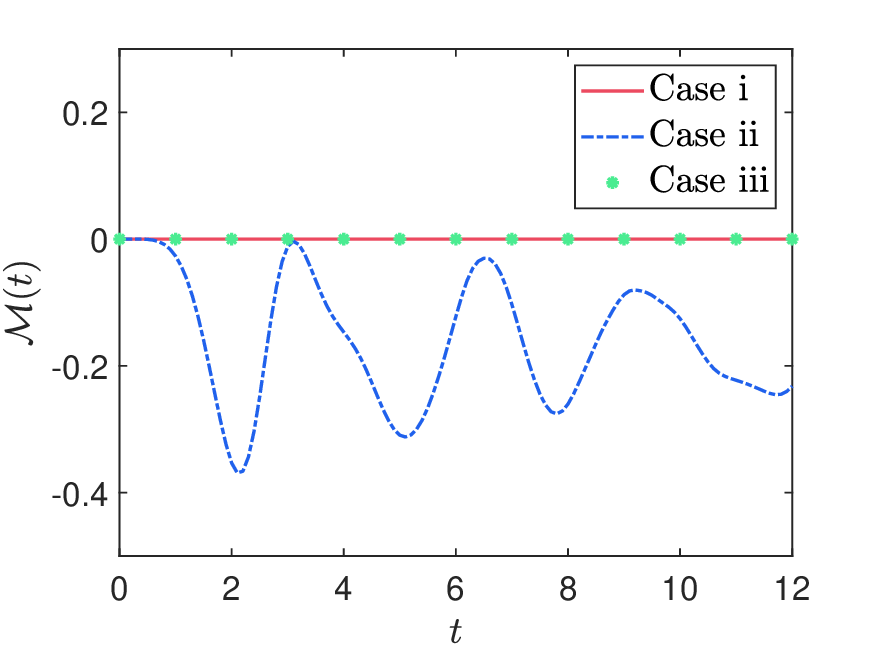}
    	\caption{Evolution of mass $\mathcal{N}(t)$ and energy $\mathcal{E}(t)$ (left) and magnetization $\mathcal{M}(t)$ (right) for Case i-Case iii in \textbf{Example \ref{properties1}}.}
    	\label{Proper1}
    \end{figure}
    \begin{figure}[h]
    	\centering
    	\includegraphics[scale=0.43]{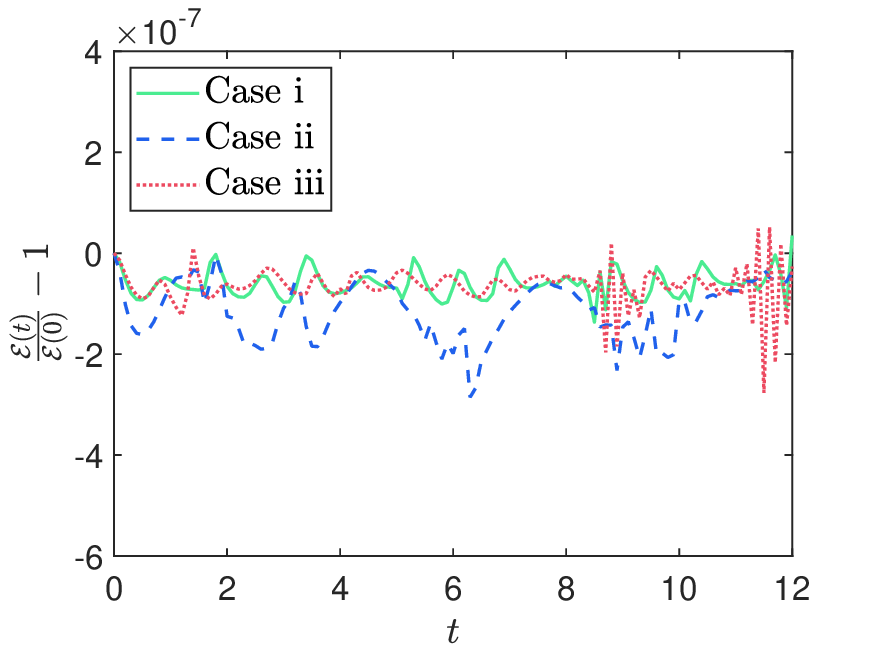}
    	\hspace{-0.3cm}
    	\includegraphics[scale=0.43]{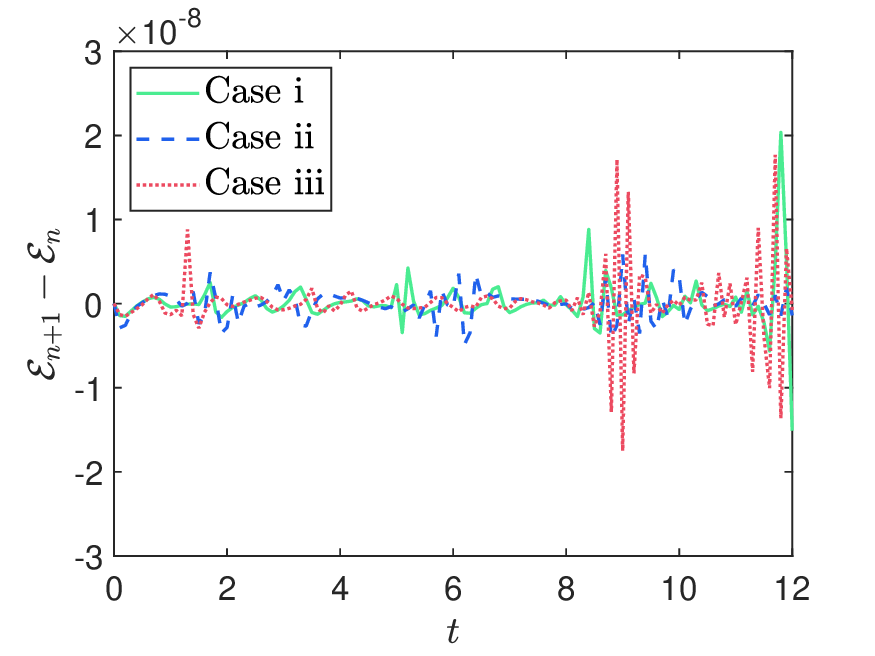}
    	\caption{Evolution of $\frac{\mathcal{E}(t)}{\mathcal{E}(0)}-1$ (left) and $\mathcal{E}_{n+1}-\mathcal{E}_{n}$ (right) for Case i-Case iii in \textbf{Example \ref{properties1}}.}
    	\label{Proper2}
    \end{figure}
    \begin{figure}[h]
    	\centering
    	\setcounter {subfigure} {0} a){
    		\includegraphics[scale=0.4]{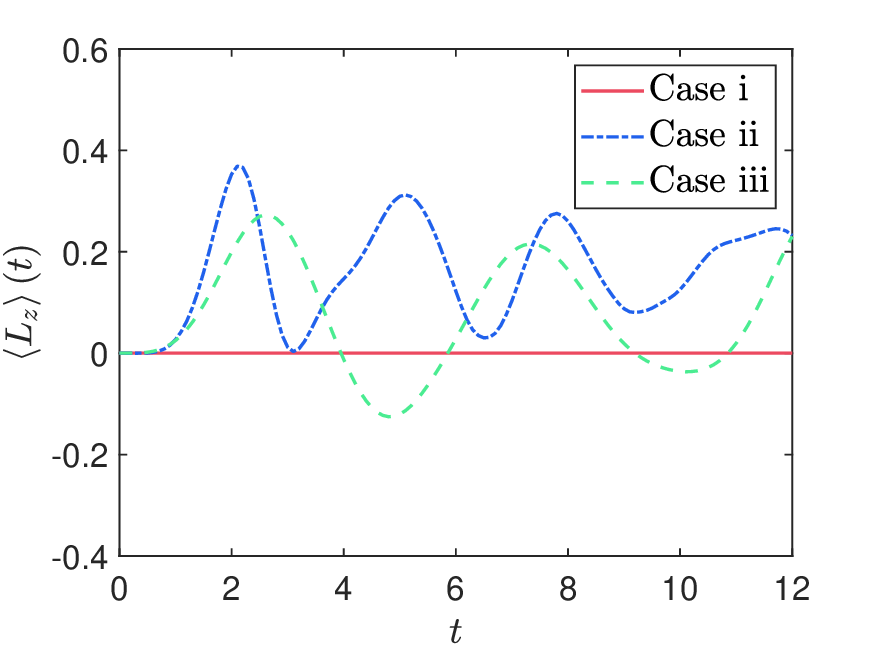}}
    	\hspace{-0.3cm}
    	\setcounter {subfigure} {0} b){
    		\includegraphics[scale=0.4]{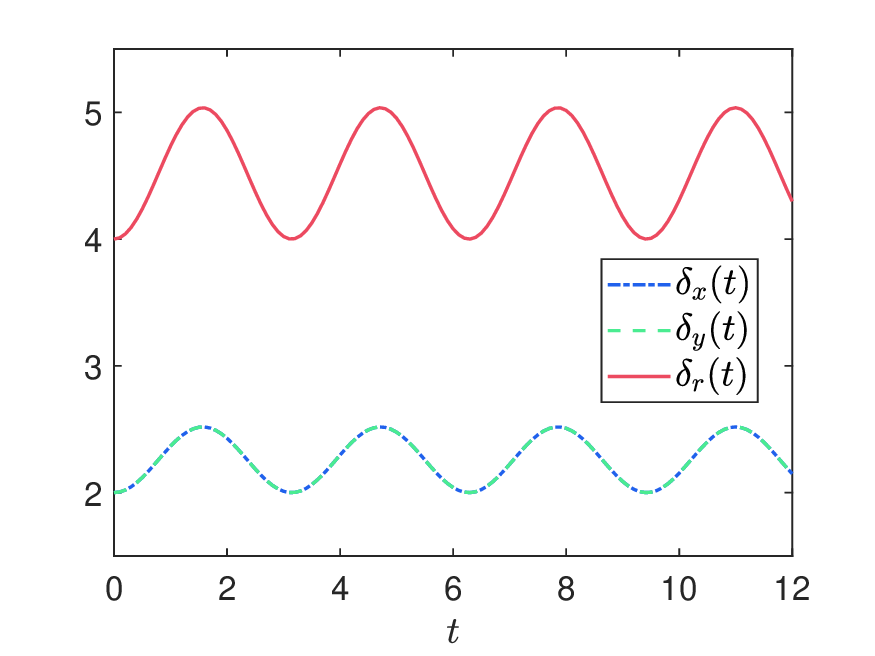}} \\
    	\setcounter {subfigure} {0} c){
    		\includegraphics[scale=0.4]{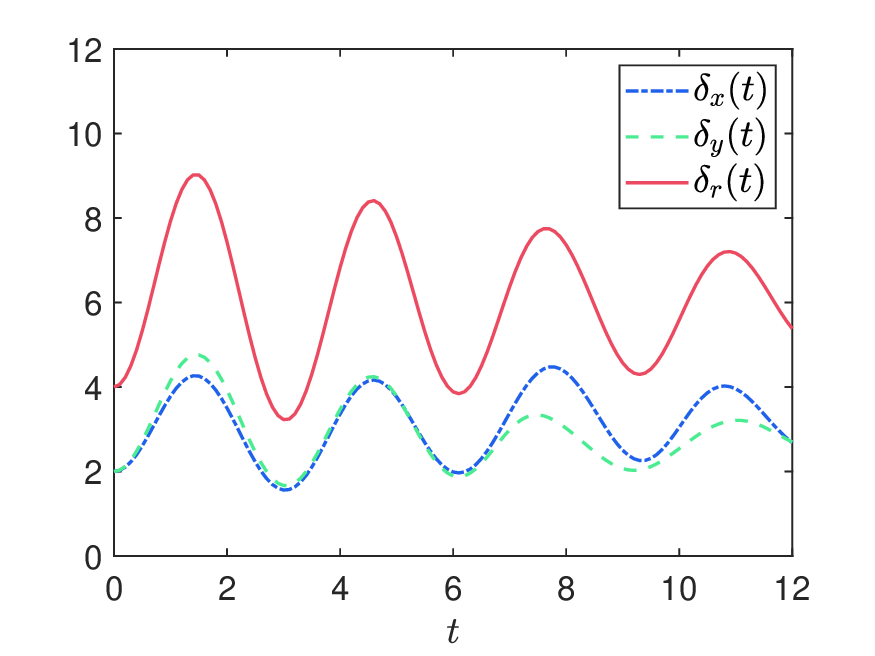}}
    	\hspace{-0.3cm}
    	\setcounter {subfigure} {0} d){
    		\includegraphics[scale=0.4]{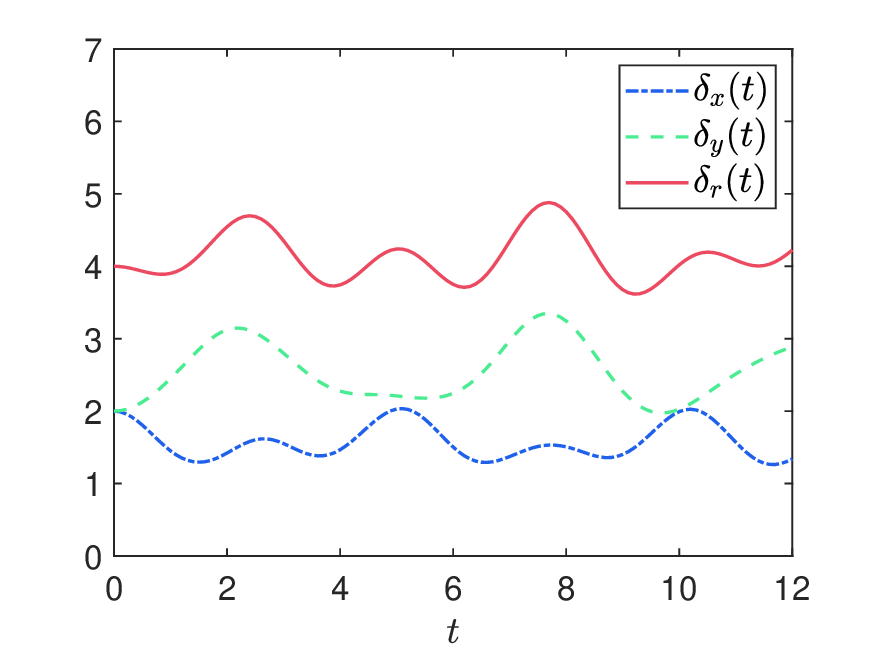}}
    	\caption{Evolution of angular momentum expectation $\langle L_z \rangle (t)$ (a) and condensate widths (b)-(d) for Case i-Case iii in \textbf{Example \ref{properties1}}.}
    	\label{Proper3}
    \end{figure}

    Figure \ref{Proper1}-\ref{Proper2} present the time evolution of mass $\mathcal{N}(t)$, energy $\mathcal{M}(t)$, magnetization $\mathcal{M}(t)$, $\mathcal{E}(t)/\mathcal{E}(0)-1$, and $\mathcal{E}_{n+1}-\mathcal{E}_n$.
    Figure \ref{Proper1} demonstrates the conservation of mass, as well as the conservation of magnetization under the condition $\gamma = 0$. Furthermore, Figure \ref{Proper1}-\ref{Proper2} show that energy is approximately conserved with high accuracy at the discrete level.
    Figure \ref{Proper3} presents the evolution of the angular momentum expectation $\langle L_z\rangle(t)$ and the $\delta_{\nu}(t)$ ($\nu=x$, $y$, $r$).
    Figure \ref{Proper3} shows that under the condition about radially symmetric harmonic potential and $\gamma=0$, the angular momentum expectation is conserved and $\delta_r$ is periodic. Furthermore, we observe that $\delta_x = \delta_y = \frac{1}{2}\delta_r$ for radially symmetric initial data.

\subsection{SOC effects.}
    In this subsection, we investigate the effects of SOC on dynamics in rotating spin-orbit coupled spin-2 BECs.
    \begin{exmp}\label{soc1}
    	In the example, we investigate the SOC effects in 2D case. To this end, we take the parameters $c_0=243$, $c_1=12.1$, $c_2=-13$ \cite{C2011,T2020}, $\Omega=0.2$, and choose different value of the spin-orbit coupling strength $\gamma$. Regarding the initial functions, we consider the following two cases
    	\begin{itemize}
    		\item $\mathrm{\mathbf{Case~i:}}$ $\quad\psi_{\pm2}^0(\mathbf{x})=\phi(\mathbf{x}),\quad\psi_{\pm1}^0(\mathbf{x})=\phi(\mathbf{x}),\quad\psi_0^0(\mathbf{x})=6\sqrt{7}\phi(\mathbf{x}).$
    		\item $\mathrm{\mathbf{Case~ii:}}$ $\quad\psi_{\pm2}^0(\mathbf{x})=\phi(\mathbf{x})(x+\im y),\quad\psi_{\pm1}^0(\mathbf{x})=\phi(\mathbf{x}),\quad\psi_0^0(\mathbf{x})=6\sqrt{7}\phi(\mathbf{x})(x+\im y),$
    	\end{itemize}
    	where $\phi(\mathbf{x})=e^{-|\mathbf{x}|^2/2}/\left(16\sqrt{\pi}\right)$.
    \end{exmp}
    \begin{figure}[h!]
    	\centering
    	\includegraphics[width=1\textwidth]{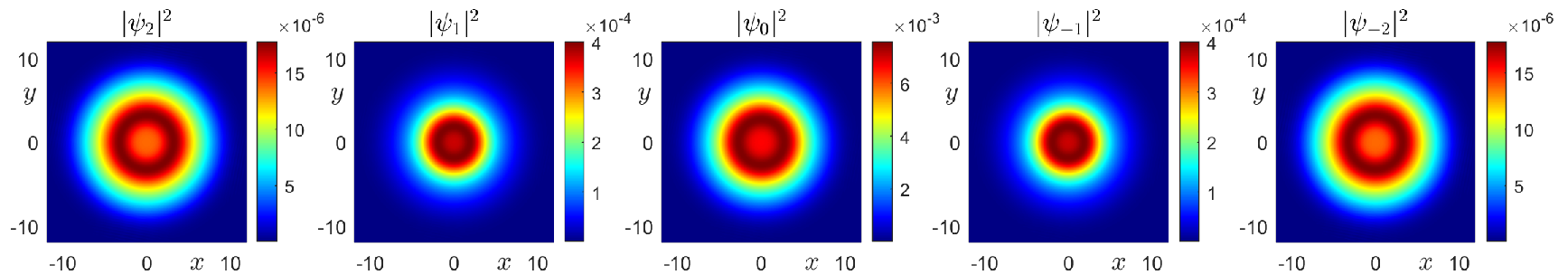}
    	\includegraphics[width=1\textwidth]{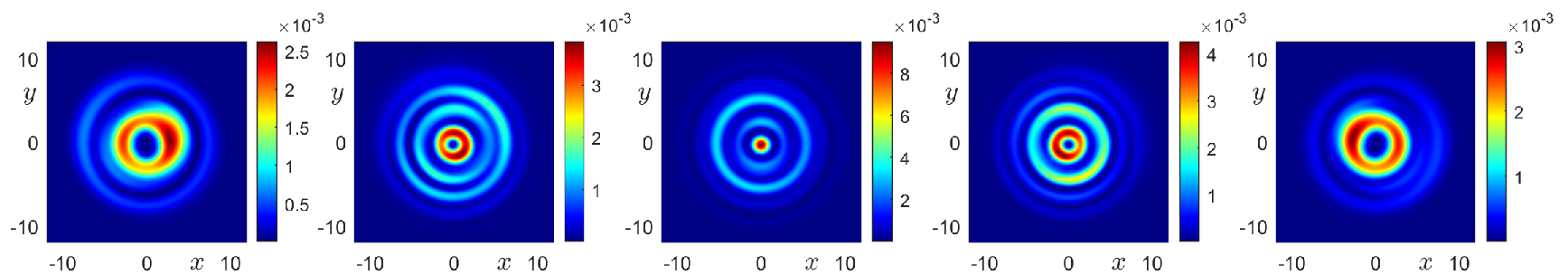} 
    	\includegraphics[width=1\textwidth]{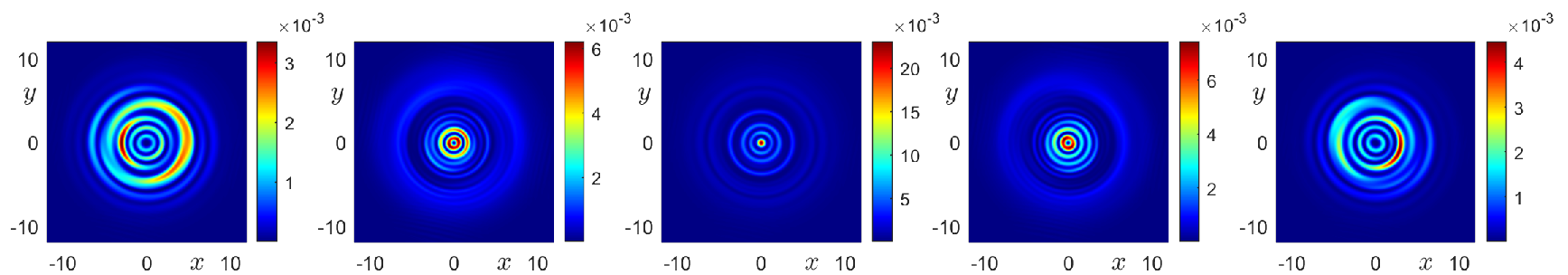} 
    	\caption{Contour plots of the densities with $\gamma = 0, 0.7, 2$ (top to bottom) in Case i of \textbf{Example \ref{soc1}}.}
    	\label{SOC1}
    \end{figure}
    \begin{figure}[h!]
    	\centering
    	\includegraphics[width=1\textwidth]{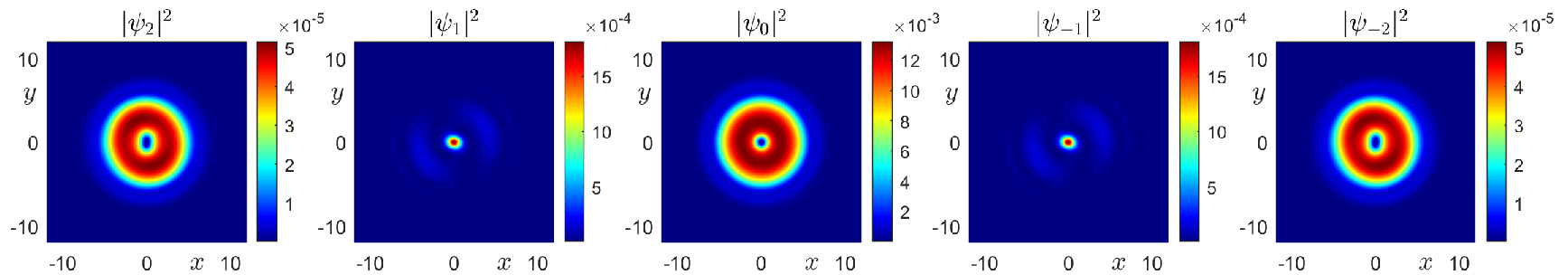}
    	\includegraphics[width=1\textwidth]{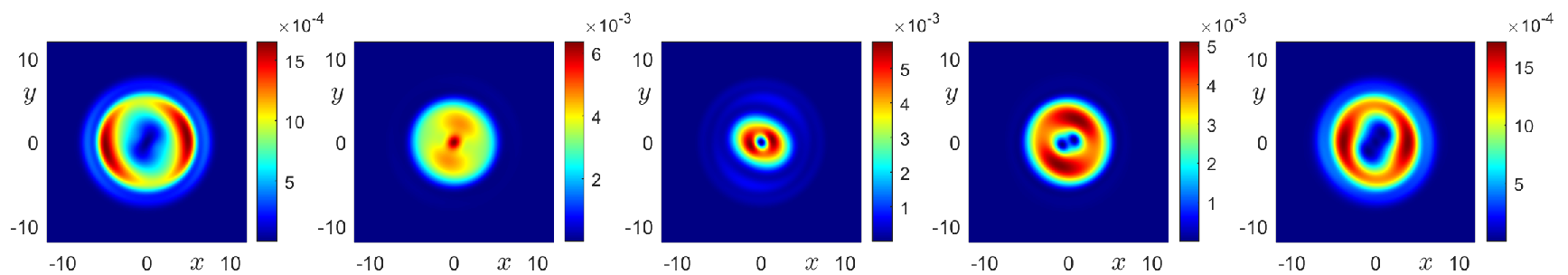} 
    	\includegraphics[width=1\textwidth]{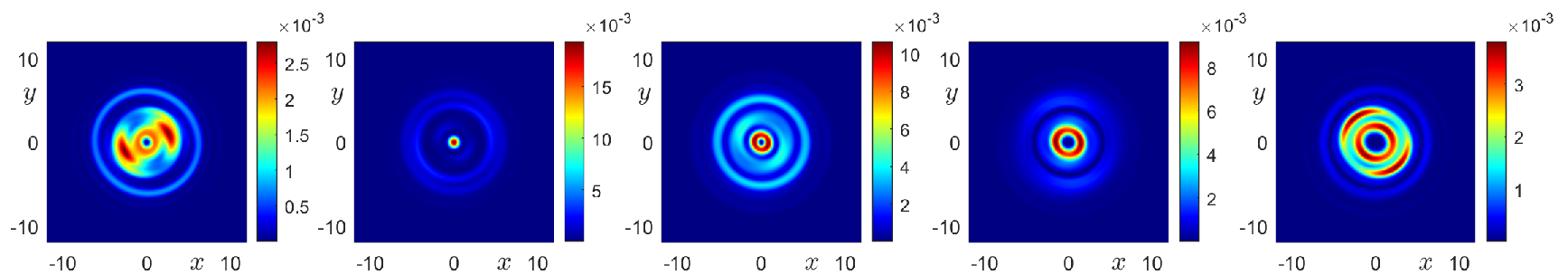} 
    	\caption{Contour plots of the densities with $\gamma = 0, 0.3, 1.2$ (top to bottom) in Case ii of \textbf{Example \ref{soc1}}.}
    	\label{SOC2}
    \end{figure}

    Figure \ref{SOC1}-\ref{SOC2} show contour plots of the densities with different $\gamma$ at time $t=1$ for Case i-ii respectively, computed using TS2 with mesh size $h=1/16$ and time step $\tau=10^{-3}$ on the computational domain $\mathcal{D}=[-24,24]^2$. These figures demonstrate that spin-orbit coupling interaction can generate spatial spin structures \cite{K2012}.

\subsection{Dynamics of a vortex lattice.}
    In this subsection, we investigate the dynamics of the vortex lattice in rotating spin-orbit coupled spin-2 BECs.
    \begin{exmp}\label{quan1}

        In this example, we choose the parameters $c_0=243$, $c_1=12.1$, $c_2=-13$, $\Omega=0.8$, $\gamma=2$, and a trapping potential \eqref{harmonic} with $\gamma_x=\gamma_y=1$. The initial condition is taken as a stationary vortex state, computed via the preconditioned conjugate gradient method under the conditions specified above. Then we investigate the dynamics of vortex lattice through the following two cases
    	\begin{itemize}
    		\item $\mathrm{\mathbf{Case~i:}}$ The spin-orbit coupling strength is increased to $\gamma = 4$.
    		\item $\mathrm{\mathbf{Case~ii:}}$ The external potential is made anisotropic with $\gamma_x = 0.9$ and $\gamma_y = 1.1$.
    	\end{itemize}
    \end{exmp}
    \begin{figure}[h!]
    	\centering
    	\includegraphics[width=1\textwidth]{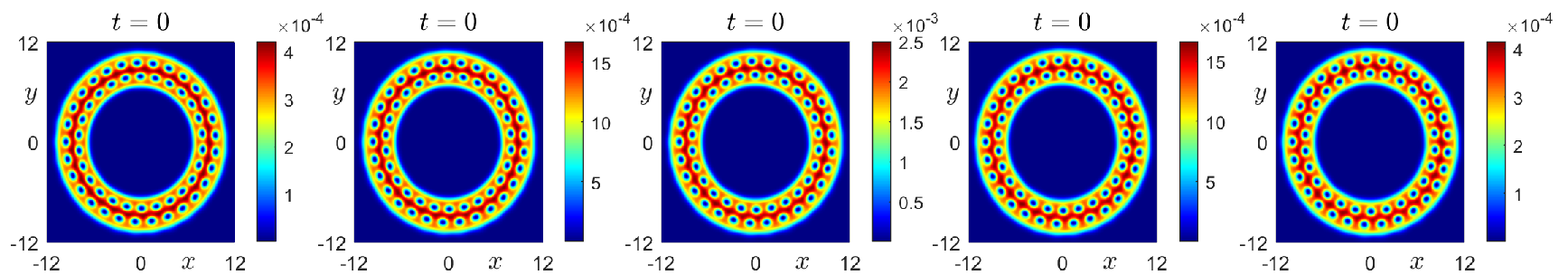} 
    	\includegraphics[width=1\textwidth]{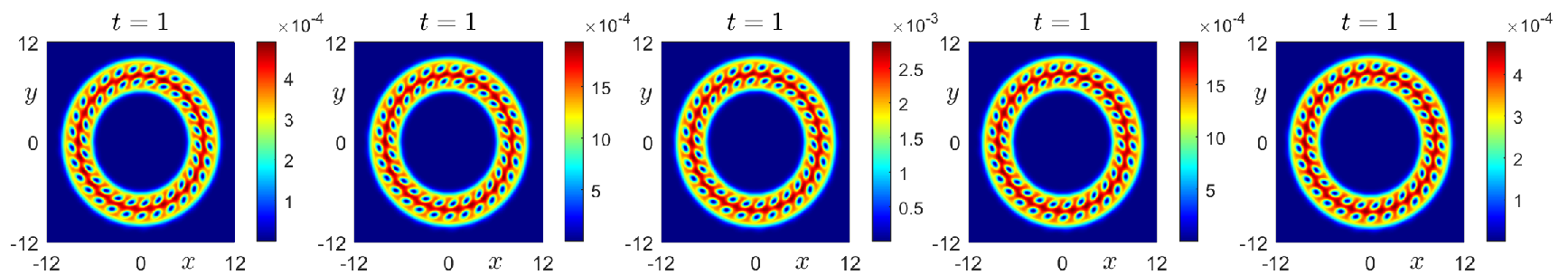} 
    	\includegraphics[width=1\textwidth]{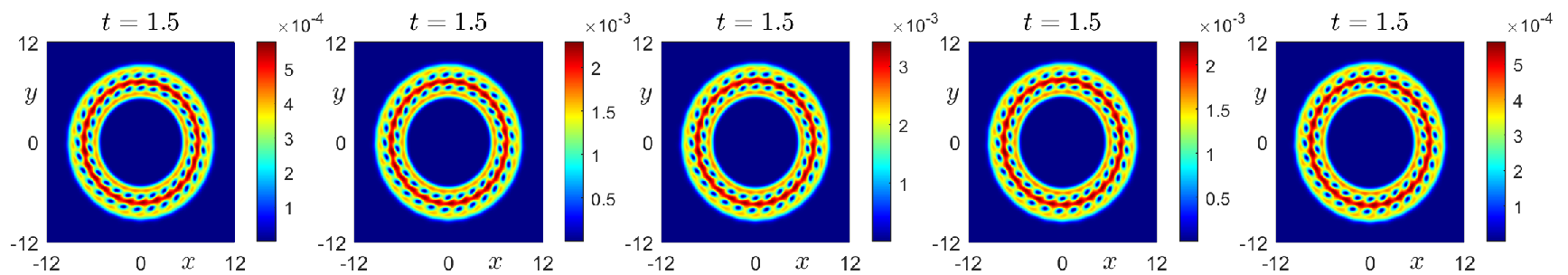} 
    	\caption{Contour plots of $|\psi_{\ell}|^2$ ($\ell = 2, 1, 0, -1, -2$, from left to right) for Case i in \textbf{Example \ref{quan1}}.}
    	\label{Quan1}
    \end{figure}
    \begin{figure}[h!]
    	\centering
    	\includegraphics[width=1\textwidth]{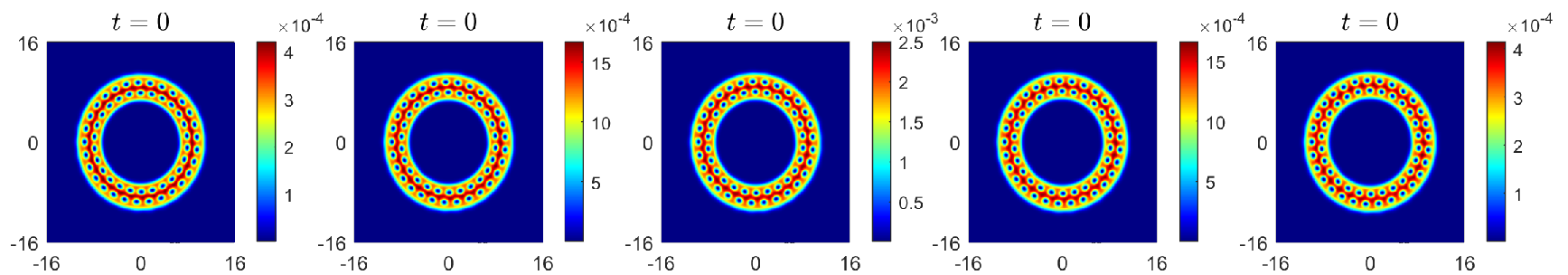} 
    	\includegraphics[width=1\textwidth]{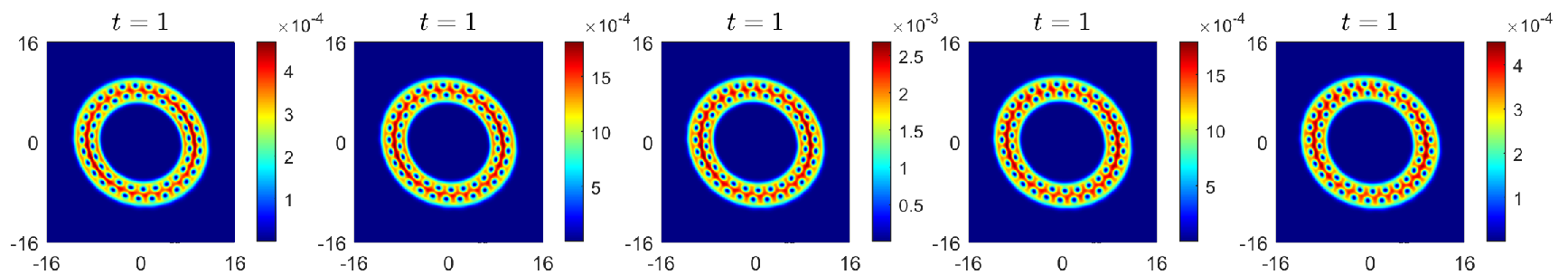} 
    	\includegraphics[width=1\textwidth]{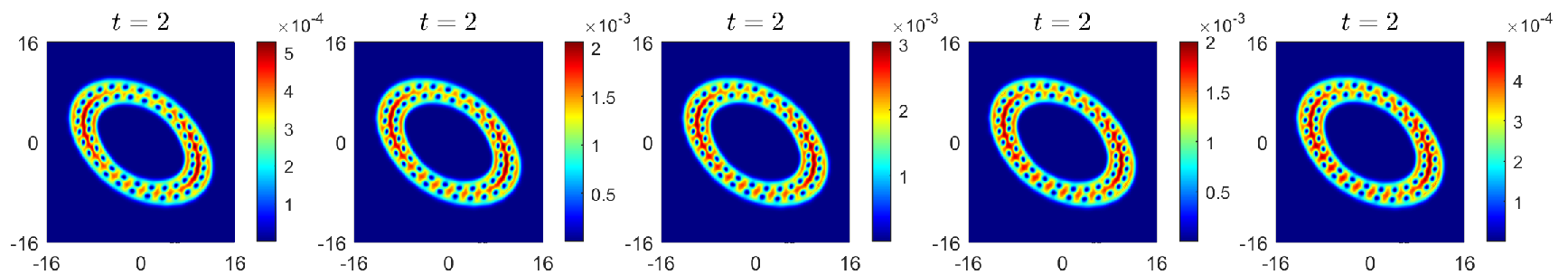} 
    	\caption{Contour plots of $|\psi_{\ell}|^2$ ($\ell = 2, 1, 0, -1, -2$, from left to right) for Case ii in \textbf{Example \ref{quan1}}.}
    	\label{Quan2}
    \end{figure}

    Figure \ref{Quan1}-\ref{Quan2} depict contour plots of the densities with different time $t$ for Case i-ii respectively, computed using TS2 with grid point number $N=256$ and time step $\tau=10^{-3}$ on the computational domain $\mathcal{D}=\left[-18,18\right]^2$.
    From Figure \ref{Quan1}, we can observe that the vortex lattice undergoes rotation and contraction as $\gamma$ increases. Additionally, Figure \ref{Quan2} shows that under the anisotropic external potential, the condensates expand along the $x$-direction and compress along the $y$-direction over time, leading to the formation of a sheet-like vortex lattice \cite{BW2006,E2002}.

\section{Conclusion}\label{sec-c}
  	We presented efficient high-order numerical schemes to simulate the dynamics of rotating SOC spin-2 BECs. The Hamiltonian is split into a linear part $\mathcal{A}$ (comprising the Laplace, rotation, and SOC terms) and a nonlinear part $\mathcal{B}$ (consisting of all remaining terms). We integrate the linear subproblem exactly and explicitly in phase space through a function mapping, which transforms the equation into an autonomous evolution equation without rotation term. This mapping The nonlinear subproblem is integrated analytically in physical space as usual. This compact splitting facilitates the design of high-order Fourier spectral method. Our method is spectrally accurate in space and high order in time. It is explicit, unconditionally stable, and conserves the mass and magnetization (when $\gamma = 0$) at discrete level. The dynamical laws of total mass, energy, magnetization, angular momentum expectation and condensate widths are also derived and confirmed numerically. Moreover, our method can be easily adapted and extended to simulate other rotating systems, such as the rotating SOC spin-3 BECs with/without dipole-dipole interactions.


\section*{Acknowledgements}



This work was partially supported by the National Natural Science Foundation of China No. 11971007 (Y. Yuan), No. 12271400, the National Key R\&D Program of China No. 2024YFA1012803 and basic research fund of Tianjin University under grant 2025XJ21-0010 (X. Liu and Y. Zhang).

\bibliographystyle{plain}

\end{document}